\newtheorem{thm}{Theorem}[section]
\newtheorem{lem}[thm]{Lemma}
\newtheorem{defn}[thm]{Definition}
\numberwithin{equation}{section}
\begin{document}

\title{\bf Anomaly cancellation and modularity $: E_{8}\times E_{8}\times E_{8}$ case }
\author{Siyao Liu \hskip 0.4 true cm Yong Wang$^{*}$ \hskip 0.4 true cm Yuchen Yang }

\thanks{{\scriptsize
\hskip -0.4 true cm \textit{2010 Mathematics Subject Classification:}
58C20; 57R20; 53C80.
\newline \textit{Key words and phrases:} Modular invariance; Cancellation formulas; $E_{8}$ bundles.
\newline \textit{$^{*}$Corresponding author}}}

\maketitle

\begin{abstract}
 \indent In \cite{HHLZ} and \cite{WG}, the authors gave anomaly cancellation formulas for the gauge groups $E_{8},$ $E_{8}\times E_{8}.$ 
 In this paper, we mainly deal with the case of gauge group $E_{8}\times E_{8}\times E_{8}.$
 Using the $E_{8}\times E_{8}\times E_{8}$ bundle, we construct some modular forms over $SL_{2}(\mathbf{Z}).$
 By these modular forms, we get some new anomaly cancellation formulas of characteristic forms.
\end{abstract}

\vskip 0.2 true cm


\pagestyle{myheadings}
\markboth{\rightline {\scriptsize Liu}}
         {\leftline{\scriptsize Anomaly cancellation and modularity $: E_{8}\times E_{8}\times E_{8}$ case}}

\bigskip
\bigskip


\section{ Introduction }

In \cite{AW}, Alvarez-Gaum\'{e} and Witten discovered the ``miraculous cancellation" formula.
This formula reveals a beautiful relation between the top components of the Hirzebruch $\widehat{L}$-form and $\widehat{A}$-form of a $12$-dimensional smooth Riemannian manifold $M.$
Liu established higher-dimensional ``miraculous cancellation" formulas for $(8k+4)$-dimensional Riemannian manifolds by developing modular invariance properties of characteristic forms \cite{L}.
Han and Zhang established a general cancellation formula that involves a complex line bundle for $(8k+4)$-dimensional smooth Riemannian manifold in \cite{HZ2,HZ}.
For higher-dimensional smooth Riemannian manifolds the authors obtained some cancellation formulas in \cite{HH,HLZ3,HLZ2,W1}.
And in \cite{LW}, the authors generalized the Han-Liu-Zhang cancellation formulas to the $(a, b)$ type cancellation formulas.

Wang and Yang twisted the Chen-Han-Zhang $SL_{2}(\mathbf{Z})$ modular form \cite{CHZ} by $E_{8}$ bundles and got $SL_{2}(\mathbf{Z})$ modular forms of weight $14$ and $10$ for $14$ and $10$ dimensional spin manifolds \cite{WY}.
They also twisted the Liu's modular form \cite{L2} by $E_{8}$ bundles and got $\Gamma^{0}(2)$ and $\Gamma_{0}(2)$ modular forms of weight $14$ and $10$ for $12$ dimensional spin manifold.
In \cite{HLZ}, Han, Liu and Zhang showed that both of the Ho$\check{r}$ava-Witten anomaly factorization formula for the gauge group $E_{8}$ and the Green-Schwarz anomaly factorization formula for the gauge group $E_{8}\times E_{8}$ could be derived through modular forms of weight $14.$
In \cite{HHLZ}, Han, Huang, Liu and Zhang introduced a modular form of weight $14$ over $SL_{2}(\mathbf{Z})$ and a modular form of weight $10$ over $SL_{2}(\mathbf{Z})$ by $E_{8}$ bundles and they got some interesting anomaly cancellation formulas on $12$ dimensional manifolds.

The motivation of this paper is to prove more anomaly cancellation formulas by twisting $E_{8}\times E_{8}\times E_{8}$ bundles.
We get some $SL_{2}(\mathbf{Z})$ modular forms of weight $10,$ $14,$ $18$ by twisting $E_{8}\times E_{8}\times E_{8}$ bundles and we get some new anomaly cancellation formulas for $12$ dimensional oriented smooth closed manifold.
We also obtain anomaly cancellation formulas for the gauge groups $E_{8}\times E_{8}\times E_{8}.$

A brief description of the organization of this paper is as follows.
In Section 2, we give some definitions and basic notions that we will use in this paper.
Section 3 contains a discussion of anomaly cancellation formulas for the gauge groups $E_{8},$ $E_{8}\times E_{8}$ and $E_{8}\times E_{8}\times E_{8}.$
The summary of the paper is given in Section 4.


\vskip 1 true cm

\section{ The basic knowledge }

Firstly, we give some definitions and basic notions that will be used throughout the paper.
For the details, see \cite{A,H,HW,K,Z}.

Let $Z$ be a 12 dimensional oriented smooth closed manifold, $TZ$ be the tangent bundle of $Z,$ $\nabla^{TZ}$ be the associated Levi-Civita connection on $TZ$ and $R^{TZ}=(\nabla^{TZ})^{2}$ be the curvature of $\nabla^{TZ}.$
According to the detailed descriptions in \cite{Z}, let $\widehat{A}(TZ)$ and $\widehat{L}(TZ)$ be the Hirzebruch characteristic forms defined respectively by
\begin{align}
\widehat{A}(TZ)&=\det\nolimits^{\frac{1}{2}}\left(\frac{\frac{\sqrt{-1}}{4\pi}R^{TZ}}{\sinh(\frac{\sqrt{-1}}{4\pi}R^{TZ})}\right),\\
\widehat{L}(TZ)&=\det\nolimits^{\frac{1}{2}}\left(\frac{\frac{\sqrt{-1}}{2\pi}R^{TZ}}{\tanh(\frac{\sqrt{-1}}{4\pi}R^{TZ})}\right).
\end{align}

Let $E, F$ be two Hermitian vector bundles over $Z$ carrying Hermitian connection $\nabla^{E}, \nabla^{F}$ respectively.
Let $R^{E}=(\nabla^{E})^{2}$ (resp. $R^{F}=(\nabla^{F})^{2}$) be the curvature of $\nabla^{E}$ (resp. $\nabla^{F}$).
If we set the formal difference $G=E-F,$ then $G$ carries an induced Hermitian connection $\nabla^{G}$ in an obvious sense.
We define the associated Chern character form as
\begin{align}
\text{ch}(G)=\text{tr}\left[\exp\left(\frac{\sqrt{-1}}{2\pi}R^{E}\right)\right]-\text{tr}\left[\exp\left(\frac{\sqrt{-1}}{2\pi}R^{F}\right)\right].
\end{align}

For any complex number $q,$ let
\begin{align}
\wedge_{q}(E)&=\mathbf{C}|_{M}+qE+q^{2}\wedge^{2}(E)+\cdot\cdot\cdot,\\
S_{q}(E)&=\mathbf{C}|_{M}+qE+q^{2}S^{2}(E)+\cdot\cdot\cdot,
\end{align}
denote respectively the total exterior and symmetric powers of $E,$ which live in $K(Z)[[q]].$
The following relations between these operations hold
\begin{align}
S_{q}(E)=\frac{1}{\wedge_{-q}(E)},~~~~ \wedge_{q}(E-F)=\frac{\wedge_{q}(E)}{\wedge_{q}(F)}.
\end{align}

If $W$ is a real Euclidean vector bundle over $Z$ carrying a Euclidean connection $\nabla^{W},$ then its complexification $W_{\mathbf{C}}=W\otimes \mathbf{C}$ is a complex vector bundle over $Z$ carrying a canonical induced Hermitian metric from that of $W,$ as well as a Hermitian connection $\nabla^{W_{\mathbf{C}}}$ induced from $\nabla^{W}.$
If $E$ is a complex vector bundle over $Z,$ set $\widetilde{E}=E-\dim E.$
If $\omega$ is a differential form over $Z,$ we denote by $\omega^{(i)}$ its degree $i$ component.

Refer to \cite{C}, we recall the four Jacobi theta functions are defined as follows:
\begin{align}
&\theta(v, \tau)=2q^{\frac{1}{8}}\sin(\pi v)\prod^{\infty}_{j=1}[(1-q^{j})(1-e^{2\pi\sqrt{-1}v}q^{j})(1-e^{-2\pi\sqrt{-1}v}q^{j})];\\
&\theta_{1}(v, \tau)=2q^{\frac{1}{8}}\cos(\pi v)\prod^{\infty}_{j=1}[(1-q^{j})(1+e^{2\pi\sqrt{-1}v}q^{j})(1+e^{-2\pi\sqrt{-1}v}q^{j})];\\
&\theta_{2}(v, \tau)=\prod^{\infty}_{j=1}[(1-q^{j})(1-e^{2\pi\sqrt{-1}v}q^{j-\frac{1}{2}})(1-e^{-2\pi\sqrt{-1}v}q^{j-\frac{1}{2}})];\\
&\theta_{3}(v, \tau)=\prod^{\infty}_{j=1}[(1-q^{j})(1+e^{2\pi\sqrt{-1}v}q^{j-\frac{1}{2}})(1+e^{-2\pi\sqrt{-1}v}q^{j-\frac{1}{2}})],
\end{align}
where $q=e^{2\pi\sqrt{-1}\tau}$ with $\tau\in\mathbf{H},$ the upper half complex plane.

Let
\begin{align}
\theta'(0, \tau)=\frac{\partial \theta(v, \tau)}{\partial v}\Big|_{v=0}.
\end{align}
Then the following Jacobi identity holds,
\begin{align}
\theta'(0, \tau)=\pi\theta_{1}(0, \tau)\theta_{2}(0, \tau)\theta_{3}(0, \tau).
\end{align}

In what follows,
\begin{align}
SL_{2}(\mathbf{Z})=\Big\{ \Big( \begin{array}{cc}
a & b\\
c & d
\end{array}
\Big)\Big|a, b, c, d\in\mathbf{Z}, ad-bc=1
\Big \}
\end{align}
stands for the modular group.
Write $S=\Big(\begin{array}{cc}
0 & -1\\
1 & 0
\end{array}\Big),
T=\Big(\begin{array}{cc}
1 & 1\\
0 & 1
\end{array}\Big)$
be the two generators of $SL_{2}(\mathbf{Z}).$
They act on $\mathbf{H}$ by $S\tau=-\frac{1}{\tau}, T\tau=\tau+1.$
One has the following transformation laws of theta functions under the actions of $S$ and $T:$
\begin{align}
&\theta(v, \tau+1)=e^{\frac{\pi\sqrt{-1}}{4}}\theta(v, \tau),~~~~ \theta(v, -\frac{1}{\tau})=\frac{1}{\sqrt{-1}}\Big(\frac{\tau}{\sqrt{-1}}\Big)^{\frac{1}{2}}e^{\pi\sqrt{-1}\tau v^{2}}\theta(\tau v, \tau);\\
&\theta_{1}(v, \tau+1)=e^{\frac{\pi\sqrt{-1}}{4}}\theta_{1}(v, \tau),~~~~ \theta_{1}(v, -\frac{1}{\tau})=\Big(\frac{\tau}{\sqrt{-1}}\Big)^{\frac{1}{2}}e^{\pi\sqrt{-1}\tau v^{2}}\theta_{2}(\tau v, \tau);\\
&\theta_{2}(v, \tau+1)=\theta_{3}(v, \tau),~~~~ \theta_{2}(v, -\frac{1}{\tau})=\Big(\frac{\tau}{\sqrt{-1}}\Big)^{\frac{1}{2}}e^{\pi\sqrt{-1}\tau v^{2}}\theta_{1}(\tau v, \tau);\\
&\theta_{3}(v, \tau+1)=\theta_{2}(v, \tau),~~~~ \theta_{3}(v, -\frac{1}{\tau})=\Big(\frac{\tau}{\sqrt{-1}}\Big)^{\frac{1}{2}}e^{\pi\sqrt{-1}\tau v^{2}}\theta_{3}(\tau v, \tau).
\end{align}
Differentiating the above transformation formulas, we get that
\begin{align}
&\theta'(v, \tau+1)=e^{\frac{\pi\sqrt{-1}}{4}}\theta'(v, \tau),\\
&\theta'(v, -\frac{1}{\tau})=\frac{1}{\sqrt{-1}}\Big(\frac{\tau}{\sqrt{-1}}\Big)^{\frac{1}{2}}e^{\pi\sqrt{-1}\tau v^{2}}(2\pi\sqrt{-1}\tau v\theta(\tau v, \tau)+\tau\theta'(\tau v, \tau));\nonumber\\
&\theta_{1}'(v, \tau+1)=e^{\frac{\pi\sqrt{-1}}{4}}\theta_{1}'(v, \tau),\\
&\theta_{1}'(v, -\frac{1}{\tau})=\Big(\frac{\tau}{\sqrt{-1}}\Big)^{\frac{1}{2}}e^{\pi\sqrt{-1}\tau v^{2}}(2\pi\sqrt{-1}\tau v\theta_{2}(\tau v, \tau)+\tau\theta'_{2}(\tau v, \tau));\nonumber\\
&\theta'_{2}(v, \tau+1)=\theta'_{3}(v, \tau),\\
&\theta'_{2}(v, -\frac{1}{\tau})=\Big(\frac{\tau}{\sqrt{-1}}\Big)^{\frac{1}{2}}e^{\pi\sqrt{-1}\tau v^{2}}(2\pi\sqrt{-1}\tau v\theta_{1}(\tau v, \tau)+\tau\theta'_{1}(\tau v, \tau));\nonumber\\
&\theta'_{3}(v, \tau+1)=\theta'_{2}(v, \tau),\\
&\theta'_{3}(v, -\frac{1}{\tau})=\Big(\frac{\tau}{\sqrt{-1}}\Big)^{\frac{1}{2}}e^{\pi\sqrt{-1}\tau v^{2}}(2\pi\sqrt{-1}\tau v\theta_{3}(\tau v, \tau)+\tau\theta'_{3}(\tau v, \tau)).\nonumber
\end{align}
Therefore
\begin{align}
\theta'(0, -\frac{1}{\tau})=\frac{1}{\sqrt{-1}}\Big(\frac{\tau}{\sqrt{-1}}\Big)^{\frac{1}{2}}\tau\theta'(0, \tau).
\end{align}

\begin{defn}
A modular form over $\Gamma,$ a subgroup of $SL_{2}(\mathbf{Z}),$ is a holomorphic function $f(\tau)$ on $\mathbf{H}$ such that
\begin{align}
f(g\tau):=f\Big(\frac{a\tau+b}{c\tau+d}\Big)=\chi(g)(c\tau+d)^{k}f(\tau),~\forall g=\Big(\begin{array}{cc}
a & b\\
c & d
\end{array}\Big)\in\Gamma,
\end{align}
where $\chi:\Gamma\rightarrow\mathbf{C}^{*}$ is a character of $\Gamma,$ $k$ is called the weight of $f.$
\end{defn}

Let $E_{2}(\tau)$ be Eisenstein series which is a quasimodular form over $SL_{2}(\mathbf{Z}),$ satisfying
\begin{align}
E_{2}\Big(\frac{a\tau+b}{c\tau+d}\Big)=(c\tau+d)^{2}E_{2}(\tau)-\frac{6\sqrt{-1}c(c\tau+d)}{\pi}.
\end{align}
In particular, we have
\begin{align}
&E_{2}(\tau+1)=E_{2}(\tau),\\
&E_{2}\Big(-\frac{1}{\tau}\Big)=\tau^{2}E_{2}(\tau)-\frac{6\sqrt{-1}\tau}{\pi},
\end{align}
and
\begin{align}
E_{2}(\tau)=1-24q-72q^{2}+\cdot\cdot\cdot,
\end{align}
where the $``\cdot\cdot\cdot"$ terms are the higher degree terms, all of which have integral coefficients.

For the principal $E_{8}$ bundles $P_{i}, i=1,2$ consider the associated bundles
\begin{align}
\mathcal{V}_{i}=\sum_{k=0}^{\infty}(P_{i}\times_{\rho_{k}}V_{k})q^{k}\in K(Z)[[q]].
\end{align}
Let $W_{i}=P_{i}\times_{\rho_{1}}V_{1}, i=1,2$ be the complex vector bundles associated to the adjoint representation $\rho_{1},$ $\overline{W_{i}}=P_{i}\times_{\rho_{2}}V_{2}, i=1,2$ be the complex vector bundles associated to the adjoint representation $\rho_{2}.$

Following the notation of \cite{HLZ}, we have that there are formal two forms $y_{l}^{i}, 1\leq l\leq 8, i=1,2$ such that
\begin{align}
\varphi(\tau)^{(8)}ch(\mathcal{V}_{i})=\frac{1}{2}\Big(\prod_{l=1}^{8}\theta_{1}(y_{l}^{i}, \tau)+\prod_{l=1}^{8}\theta_{2}(y_{l}^{i}, \tau)+\prod_{l=1}^{8}\theta_{3}(y_{l}^{i}, \tau)\Big),
\end{align}
and
\begin{align}
\sum_{l=1}^{8}(2\pi\sqrt{-1}y_{l}^{i})^{2}=-\frac{1}{30}c_{2}(W_{i}),
\end{align}
where $\varphi(\tau)=\prod_{n=1}^{\infty}(1-q^{n}),$ $c_{2}(W_{i})$ denotes the second Chern class of $W_{i}.$

\section{ Anomaly cancellation formulas }

\subsection{ Anomaly cancellation formulas for the gauge group $E_{8}$ }

Let $\triangle(Z)$ be the spinor bundle.
Set
\begin{align}
\Theta_{1}(T_{\mathbf{C}}Z)=&\bigotimes^{\infty}_{n=1}S_{q^{n}}(\widetilde{T_{\mathbf{C}}Z})\otimes\bigotimes_{m=1}^{\infty}\wedge_{q^{m}}(\widetilde{T_{\mathbf{C}}Z})\in K(Z)[[q]],\\
\Theta_{2}(T_{\mathbf{C}}Z)=&\bigotimes^{\infty}_{n=1}S_{q^{n}}(\widetilde{T_{\mathbf{C}}Z})\otimes\bigotimes_{m=1}^{\infty}\wedge_{-q^{m-\frac{1}{2}}}(\widetilde{T_{\mathbf{C}}Z})\in K(Z)[[q^{\frac{1}{2}}]],\\
\Theta_{3}(T_{\mathbf{C}}Z)=&\bigotimes^{\infty}_{n=1}S_{q^{n}}(\widetilde{T_{\mathbf{C}}Z})\otimes\bigotimes_{m=1}^{\infty}\wedge_{q^{m-\frac{1}{2}}}(\widetilde{T_{\mathbf{C}}Z})\in K(Z)[[q^{\frac{1}{2}}]].
\end{align}
Consider
\begin{align}
Q_{1}(Z, \tau)=&\int_{Z}e^{\frac{1}{24}E_{2}(\tau)\cdot\frac{1}{30}c_{2}(W_{i})}\widehat{A}(TZ)[ch(\triangle(Z))ch(\Theta_{1}(T_{\mathbf{C}}Z))+2^{6}ch(\Theta_{2}(T_{\mathbf{C}}Z))\\
&+2^{6}ch(\Theta_{3}(T_{\mathbf{C}}Z))]\varphi(\tau)^{(8)}ch(\mathcal{V}_{i}).\nonumber
\end{align}

\begin{lem}
$Q_{1}(Z, \tau)$ is a modular form of weight $10$ over $SL_{2}(\mathbf{Z}).$
\end{lem}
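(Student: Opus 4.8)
The plan is to run the standard modular--invariance argument on an explicit theta--function presentation of the integrand of (3.4), the twisting being governed by (2.28) and (2.29). First I would invoke the splitting principle: writing $\{\pm 2\pi\sqrt{-1}\,x_j\}_{1\le j\le 6}$ for the formal Chern roots of $T_{\mathbf{C}}Z$, recall the classical identities (see \cite{L,HLZ,Z})
\[
\widehat{A}(TZ)ch(\triangle(Z))ch(\Theta_1(T_{\mathbf{C}}Z))=2^{6}\prod_{j=1}^{6}\Big(x_j\frac{\theta'(0,\tau)}{\theta(x_j,\tau)}\cdot\frac{\theta_1(x_j,\tau)}{\theta_1(0,\tau)}\Big),
\]
together with the two companion identities for $2^{6}\widehat{A}(TZ)ch(\Theta_2(T_{\mathbf{C}}Z))$ and $2^{6}\widehat{A}(TZ)ch(\Theta_3(T_{\mathbf{C}}Z))$ obtained by replacing $\theta_1$ with $\theta_2$, resp.\ $\theta_3$. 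Substituting these into (3.4), using (2.28) for $\varphi(\tau)^{(8)}ch(\mathcal{V}_i)$ and (2.29) (which turns the prefactor into $\exp(-\tfrac{1}{24}E_2(\tau)\sum_{l=1}^{8}(2\pi\sqrt{-1}\,y^i_l)^2)$), I would display the integrand of $Q_1(Z,\tau)$ as a finite sum of products of theta functions in the $2$--forms $x_j$ and $y^i_l$ times an $E_2$--exponential; then $Q_1(Z,\tau)$ is the integral over $Z$ of the degree--$12$ part of this expression, and the modularity has to be read off from its $S$-- and $T$--behaviour.

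For $T:\tau\mapsto\tau+1$ I would use the transformation laws (2.14)--(2.17): the factor $x_j\theta'(0,\tau)/\theta(x_j,\tau)$ and the $\theta_1$--block are $T$--invariant, while the $\theta_2$-- and $\theta_3$--blocks are interchanged; likewise $\prod_l\theta_1(y^i_l,\tau)$ is fixed (the eight copies of $e^{\pi\sqrt{-1}/4}$ multiply to $1$) and $\prod_l\theta_2(y^i_l,\tau)\leftrightarrow\prod_l\theta_3(y^i_l,\tau)$. Since $E_2(\tau+1)=E_2(\tau)$, the whole integrand, hence $Q_1$, is $T$--invariant; this also forces the $q$--expansion of $Q_1$ to contain only integral powers of $q$, the half--integral ones carried by the $\theta_2$-- and $\theta_3$--blocks cancelling in pairs. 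Since each building block is manifestly free of negative powers of $q$, and $\theta(x_j,\tau),\theta_i(0,\tau)$ are non-vanishing on $\mathbf{H}$ (the $2$--forms $x_j$ being nilpotent), $Q_1(Z,\tau)$ is holomorphic on $\mathbf{H}$ and at the cusp.

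The crux is $S:\tau\mapsto-1/\tau$. From (2.14)--(2.17) and (2.22) one obtains, for each $j$, identities of the shape
\[
x_j\frac{\theta'(0,-1/\tau)}{\theta(x_j,-1/\tau)}=e^{-\pi\sqrt{-1}\tau x_j^{2}}(\tau x_j)\frac{\theta'(0,\tau)}{\theta(\tau x_j,\tau)},\qquad \frac{\theta_1(x_j,-1/\tau)}{\theta_1(0,-1/\tau)}=e^{\pi\sqrt{-1}\tau x_j^{2}}\frac{\theta_2(\tau x_j,\tau)}{\theta_2(0,\tau)},
\]
and likewise for the $\theta_2$-- and $\theta_3$--blocks; the exponentials cancel block by block, the $\theta_1$-- and $\theta_2$--blocks get swapped, the $\theta_3$--block is fixed, and $x_j$ is everywhere replaced by $\tau x_j$ --- with no residual power of $\tau$. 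By (2.28) and the $S$--transformation laws (2.15)--(2.17), $\varphi(\tau)^{(8)}ch(\mathcal{V}_i)$ transforms to $\tau^{4}e^{\pi\sqrt{-1}\tau\sum_l(y^i_l)^2}$ times the same expression with $y^i_l\mapsto\tau y^i_l$ and the three theta products permuted (here $(\tau/\sqrt{-1})^{8/2}=\tau^{4}$). Finally, by (2.25), i.e.\ $E_2(-1/\tau)=\tau^2E_2(\tau)-6\sqrt{-1}\tau/\pi$, together with (2.29),
\[
\exp\Big(-\frac{1}{24}E_2(-1/\tau)\sum_l(2\pi\sqrt{-1}\,y^i_l)^2\Big)=e^{-\pi\sqrt{-1}\tau\sum_l(y^i_l)^2}\exp\Big(-\frac{1}{24}E_2(\tau)\sum_l(2\pi\sqrt{-1}\,\tau y^i_l)^2\Big),
\]
and the anomalous factor $e^{-\pi\sqrt{-1}\tau\sum_l(y^i_l)^2}$ precisely cancels the $e^{\pi\sqrt{-1}\tau\sum_l(y^i_l)^2}$ produced by $\varphi(\tau)^{(8)}ch(\mathcal{V}_i)$; this cancellation --- which is the entire reason for inserting the prefactor $\exp(\tfrac{1}{24}E_2(\tau)\cdot\tfrac{1}{30}c_2(W_i))$ --- is the step I expect to need the most care. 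What then remains is a bookkeeping of $\tau$--powers: the three $\widehat{A}$--$\Theta$ blocks are permuted among themselves, the three theta products in $\varphi(\tau)^{(8)}ch(\mathcal{V}_i)$ are permuted, every $2$--form $x_j,y^i_l$ is rescaled by $\tau$, and there is an overall $\tau^{4}$; since extracting the degree--$12$ component of a form all of whose $2$--form generators are scaled by $\tau$ multiplies it by $\tau^{6}$, we conclude $Q_1(Z,-1/\tau)=\tau^{4+6}Q_1(Z,\tau)=\tau^{10}Q_1(Z,\tau)$. As $S$ and $T$ generate $SL_2(\mathbf{Z})$ and the associated character is trivial on both generators, Definition 2.1 shows that $Q_1(Z,\tau)$ is a modular form of weight $10$ over $SL_2(\mathbf{Z})$.
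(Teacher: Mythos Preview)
Your argument is correct and follows exactly the paper's approach: rewrite the integrand via the theta-function identities (3.5)--(3.8) and then verify invariance under $T$ and the weight-$10$ transformation under $S$ using (2.14)--(2.22) and (2.25)--(2.26). Your write-up is in fact more detailed than the paper's (which simply asserts (3.9)--(3.10)), spelling out the $E_2$--anomaly cancellation against the theta exponential from $\varphi(\tau)^{(8)}ch(\mathcal{V}_i)$ and the $\tau^{4+6}$ weight count, as well as holomorphicity at the cusp.
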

\begin{proof}
Let $\{ \pm2\pi\sqrt{-1}x_{k}|1\leq k\leq 6\}$  be the formal Chern roots for $T_{\mathbf{C}}Z,$ then we have
\begin{align}
&\widehat{A}(TZ)ch(\triangle(Z))ch(\Theta_{1}(T_{\mathbf{C}}Z))=\prod_{k=1}^{6}\frac{2x_{k}\theta'(0, \tau)}{\theta(x_{k}, \tau)}\frac{\theta_{1}(x_{k}, \tau)}{\theta_{1}(0, \tau)},\\
&\widehat{A}(TZ)ch(2^{6}\Theta_{2}(T_{\mathbf{C}}Z))=\prod_{k=1}^{6}\frac{2x_{k}\theta'(0, \tau)}{\theta(x_{k}, \tau)}\frac{\theta_{2}(x_{k}, \tau)}{\theta_{2}(0, \tau)},\\
&\widehat{A}(TZ)ch(2^{6}\Theta_{3}(T_{\mathbf{C}}Z))=\prod_{k=1}^{6}\frac{2x_{k}\theta'(0, \tau)}{\theta(x_{k}, \tau)}\frac{\theta_{3}(x_{k}, \tau)}{\theta_{3}(0, \tau)}.
\end{align}
Accordingly,
\begin{align}
Q_{1}(Z, \tau)&=e^{\frac{1}{24}E_{2}(\tau)\cdot\frac{1}{30}c_{2}(W_{i})}\prod_{k=1}^{6}\frac{2x_{k}\theta'(0, \tau)}{\theta(x_{k}, \tau)}\Big(\prod_{k=1}^{6}\frac{\theta_{1}(x_{k}, \tau)}{\theta_{1}(0, \tau)}+\prod_{k=1}^{6}\frac{\theta_{2}(x_{k}, \tau)}{\theta_{2}(0, \tau)}+\prod_{k=1}^{6}\frac{\theta_{3}(x_{k}, \tau)}{\theta_{3}(0, \tau)}\Big)\\
&\cdot\frac{1}{2}\Big(\prod_{l=1}^{8}\theta_{1}(y_{l}^{i}, \tau)+\prod_{l=1}^{8}\theta_{2}(y_{l}^{i}, \tau)+\prod_{l=1}^{8}\theta_{3}(y_{l}^{i}, \tau)\Big).\nonumber
\end{align}
Moreover, the following identity holds
\begin{align}
&Q_{1}(Z, \tau+1)=Q_{1}(Z, \tau),\\
&Q_{1}(Z, -\frac{1}{\tau})=\tau^{10}Q_{1}(Z, \tau).
\end{align}
From the above it follows that $Q_{1}(Z, \tau)$ is a modular form over $SL_{2}(\mathbf{Z})$ with the weight $10.$
\end{proof}

\begin{thm}
For $12$ dimensional oriented smooth closed manifold, we conclude that
\begin{align}
&\Big\{e^{\frac{1}{720}c_{2}(W_{i})}\widehat{A}(TZ)\frac{1}{30}c_{2}(W_{i})(ch(\triangle(Z))+128)\Big\}^{(12)}=
\{e^{\frac{1}{720}c_{2}(W_{i})}\widehat{A}(TZ)(ch(\triangle(Z))\\
&\cdot ch(256+2\widetilde{T_{\mathbf{C}}Z}+W_{i})+ch(32768+128\widetilde{T_{\mathbf{C}}Z}+128W_{i}+128\wedge^{2}(\widetilde{T_{\mathbf{C}}Z})))\}^{(12)}.\nonumber
\end{align}
\end{thm}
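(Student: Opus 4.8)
The plan is to read the identity off the low-order $q$-expansion of $Q_{1}(Z,\tau)$, exploiting that the space of weight-$10$ modular forms is one-dimensional. By the preceding Lemma, $Q_{1}(Z,\tau)$ is a modular form of weight $10$ over $SL_{2}(\mathbf{Z})$ with trivial character (the relations $Q_{1}(Z,\tau+1)=Q_{1}(Z,\tau)$ and $Q_{1}(Z,-\frac{1}{\tau})=\tau^{10}Q_{1}(Z,\tau)$ established there force $\chi\equiv 1$). Since the space of weight-$10$ modular forms over $SL_{2}(\mathbf{Z})$ is spanned by $E_{4}(\tau)E_{6}(\tau)$, with $q$-expansion $1-264q+\cdots$, the degree-$12$ component of $Q_{1}(Z,\tau)$ equals its $q^{0}$-coefficient times $E_{4}(\tau)E_{6}(\tau)$. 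Writing $Q_{1}(Z,\tau)=\sum_{j\geq 0}A_{j}q^{j}$ with characteristic forms $A_{j}$ on $Z$, this forces $\{A_{1}\}^{(12)}=-264\,\{A_{0}\}^{(12)}$, and the Theorem drops out once $A_{0}$ and $A_{1}$ are computed and this relation is rearranged. Putting $q=0$ gives $A_{0}$ immediately: then $E_{2}(0)=1$ so the prefactor reduces to $e^{\frac{1}{720}c_{2}(W_{i})}$; one has $\Theta_{1}(T_{\mathbf{C}}Z)|_{q^{0}}=\Theta_{2}(T_{\mathbf{C}}Z)|_{q^{0}}=\Theta_{3}(T_{\mathbf{C}}Z)|_{q^{0}}=\mathbf{C}$, so, since $\triangle(Z)$ has rank $2^{6}$, the bracket collapses to $ch(\triangle(Z))+2\cdot 2^{6}=ch(\triangle(Z))+128$; and $\varphi(\tau)^{(8)}ch(\mathcal{V}_{i})|_{q^{0}}=1$ (in the product of theta functions the $\theta_{1}$-term is $O(q)$ while the $\theta_{2}$- and $\theta_{3}$-terms equal $1$ at $q=0$). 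Hence $A_{0}=e^{\frac{1}{720}c_{2}(W_{i})}\widehat{A}(TZ)(ch(\triangle(Z))+128)$, which is precisely the form on the left-hand side of the Theorem.

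The computational heart is $A_{1}$, which I would assemble from four inputs. From $E_{2}(\tau)=1-24q+\cdots$ one gets $e^{\frac{1}{24}E_{2}(\tau)\cdot\frac{1}{30}c_{2}(W_{i})}=e^{\frac{1}{720}c_{2}(W_{i})}\left(1-\frac{1}{30}c_{2}(W_{i})q+\cdots\right)$, so the prefactor contributes $-e^{\frac{1}{720}c_{2}(W_{i})}\widehat{A}(TZ)\frac{1}{30}c_{2}(W_{i})(ch(\triangle(Z))+128)$ to $A_{1}$, namely minus the left-hand side of the Theorem. From $\Theta_{1}(T_{\mathbf{C}}Z)=\bigotimes_{n}S_{q^{n}}(\widetilde{T_{\mathbf{C}}Z})\otimes\bigotimes_{m}\wedge_{q^{m}}(\widetilde{T_{\mathbf{C}}Z})$ one reads $\Theta_{1}(T_{\mathbf{C}}Z)|_{q^{1}}=2\widetilde{T_{\mathbf{C}}Z}$, and similarly $\Theta_{2}(T_{\mathbf{C}}Z)|_{q^{1}}=\Theta_{3}(T_{\mathbf{C}}Z)|_{q^{1}}=\widetilde{T_{\mathbf{C}}Z}+\wedge^{2}(\widetilde{T_{\mathbf{C}}Z})$, while the $q^{1/2}$-coefficients $\mp\widetilde{T_{\mathbf{C}}Z}$ of $\Theta_{2}$ and $\Theta_{3}$ cancel in $2^{6}\Theta_{2}+2^{6}\Theta_{3}$. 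Finally, from $\mathcal{V}_{i}=\sum_{k}(P_{i}\times_{\rho_{k}}V_{k})q^{k}$ (with $q^{0}$-term trivial of rank $1$ and $\mathcal{V}_{i}|_{q^{1}}=W_{i}$) together with $\varphi(\tau)^{8}=1-8q+\cdots$, one gets $\varphi(\tau)^{(8)}ch(\mathcal{V}_{i})|_{q^{1}}=ch(W_{i})-8$; the constraint $\sum_{l}(2\pi\sqrt{-1}y_{l}^{i})^{2}=-\frac{1}{30}c_{2}(W_{i})$ is what reconciles this value with the theta-product expression.

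Assembling these, $A_{1}=e^{\frac{1}{720}c_{2}(W_{i})}\widehat{A}(TZ)\big(\Psi-\frac{1}{30}c_{2}(W_{i})(ch(\triangle(Z))+128)\big)$, where $\Psi:=(ch(\triangle(Z))+128)(ch(W_{i})-8)+2\,ch(\triangle(Z))ch(\widetilde{T_{\mathbf{C}}Z})+128\,ch(\widetilde{T_{\mathbf{C}}Z})+128\,ch(\wedge^{2}(\widetilde{T_{\mathbf{C}}Z}))$. Substituting $A_{0}$ and $A_{1}$ into $\{A_{1}\}^{(12)}=-264\{A_{0}\}^{(12)}$ and moving the $\frac{1}{30}c_{2}(W_{i})$-term across yields $\{e^{\frac{1}{720}c_{2}(W_{i})}\widehat{A}(TZ)\frac{1}{30}c_{2}(W_{i})(ch(\triangle(Z))+128)\}^{(12)}=\{e^{\frac{1}{720}c_{2}(W_{i})}\widehat{A}(TZ)(\Psi+264(ch(\triangle(Z))+128))\}^{(12)}$, whose left side is the left side of the Theorem. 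Finally, expanding $\Psi+264(ch(\triangle(Z))+128)$ and using $264-8=256$ and $264\cdot 128-2^{10}=2^{15}=32768$, this bracket regroups as $ch(\triangle(Z))\cdot ch(256+2\widetilde{T_{\mathbf{C}}Z}+W_{i})+ch(32768+128\widetilde{T_{\mathbf{C}}Z}+128W_{i}+128\wedge^{2}(\widetilde{T_{\mathbf{C}}Z}))$, which is the right side of the Theorem.

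The main obstacle is bookkeeping rather than anything conceptual: expanding the three $\Theta$-bundles and $\varphi(\tau)^{(8)}ch(\mathcal{V}_{i})$ to first order in $q$ while correctly tracking all powers of $2$ (the $2^{6}$ multiplying $\Theta_{2},\Theta_{3}$, the $2^{8}$ produced by the eight $\theta_{1}$-factors and ultimately feeding into $2^{15}=32768$, and the rank $2^{6}$ of $\triangle(Z)$), and ensuring that the half-integral powers of $q$ genuinely cancel so that $Q_{1}$ is holomorphic at the cusp. The one point requiring care is that the transformation $\tau\mapsto-\frac{1}{\tau}$ rescales the formal Chern roots $x_{k}\mapsto\tau x_{k}$, so only the top (degree $12$) component of the characteristic-form-valued series $Q_{1}(Z,\tau)$ transforms with a pure modular weight (the $E_{2}$-exponential being inserted precisely to cancel the anomaly in that component); this is why the comparison of $q$-coefficients must be carried out inside $\{\,\cdot\,\}^{(12)}$, exactly as the statement is phrased.
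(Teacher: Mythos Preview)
Your proof is correct and follows essentially the same approach as the paper: expand each factor of $Q_{1}(Z,\tau)$ to first order in $q$, use that the weight-$10$ modular forms over $SL_{2}(\mathbf{Z})$ are spanned by $E_{4}E_{6}=1-264q+\cdots$, and compare the $q^{0}$ and $q^{1}$ coefficients in degree $12$. Your write-up is in fact more explicit than the paper's (you spell out the arithmetic $-8+264=256$, $-1024+264\cdot128=32768$ and the reason for restricting to the top degree), but the argument is the same.
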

\begin{proof}
A direct computation shows that 
\begin{align}
\Theta_{1}(T_{\mathbf{C}}Z)=&1+2q\widetilde{T_{\mathbf{C}}Z}+\cdot\cdot\cdot,\\
\Theta_{2}(T_{\mathbf{C}}Z)=&1-q^{\frac{1}{2}}\widetilde{T_{\mathbf{C}}Z}+q(\widetilde{T_{\mathbf{C}}Z}+\wedge^{2}(\widetilde{T_{\mathbf{C}}Z}))+\cdot\cdot\cdot,\\
\Theta_{3}(T_{\mathbf{C}}Z)=&1+q^{\frac{1}{2}}\widetilde{T_{\mathbf{C}}Z}+q(\widetilde{T_{\mathbf{C}}Z}+\wedge^{2}(\widetilde{T_{\mathbf{C}}Z}))+\cdot\cdot\cdot.
\end{align}
This clearly forces
\begin{align}
&e^{\frac{1}{24}E_{2}(\tau)\cdot\frac{1}{30}c_{2}(W_{i})}\widehat{A}(TZ)[ch(\triangle(Z))ch(\Theta_{1}(T_{\mathbf{C}}Z))+2^{6}ch(\Theta_{2}(T_{\mathbf{C}}Z))+2^{6}ch(\Theta_{3}(T_{\mathbf{C}}Z))]\\
&\cdot\varphi(\tau)^{(8)}ch(\mathcal{V}_{i})=e^{\frac{1}{720}c_{2}(W_{i})}\widehat{A}(TZ)(ch(\triangle(Z))+128)+q\Big(e^{\frac{1}{720}c_{2}(W_{i})}\widehat{A}(TZ)\big(ch(\triangle(Z))\nonumber\\
&\cdot ch(-8+2\widetilde{T_{\mathbf{C}}Z}+W_{i})+ch(-1024+128\widetilde{T_{\mathbf{C}}Z}+128W_{i}+128\wedge^{2}(\widetilde{T_{\mathbf{C}}Z}))-\frac{1}{30}c_{2}(W_{i})\nonumber\\
&\cdot(ch(\triangle(Z))+128)\big)\Big)+\cdot\cdot\cdot.\nonumber
\end{align}

It is well known that modular forms over $SL_{2}(\mathbf{Z})$ can be expressed as polynomials of the Einsentein series $E_{4}(\tau)$ and $E_{6}(\tau),$ where
\begin{align}
&E_{4}(\tau)=1+240q+2160q^{2}+6720q^{3}+\cdot\cdot\cdot,\\
&E_{6}(\tau)=1-504q-16632q^{2}-122976q^{3}+\cdot\cdot\cdot.
\end{align}
Their weights are $4$ and $6$ respectively.

Since $Q_{1}(Z, \tau)$ is a modular form over $SL_{2}(\mathbf{Z})$ with the weight $10,$ it must be a multiple of
\begin{align}
E_{4}(\tau)E_{6}(\tau)=1-264q+\cdot\cdot\cdot.
\end{align}

By comparing the constant term and the coefficients of $q,$ we can get Theorem 3.2.
\end{proof}

\begin{lem}
Suppose that $c_{2}(W_{i})=0,$ we have
\begin{align}
&-\{\widehat{A}(TZ)ch(\triangle(Z))ch(256+2\widetilde{T_{\mathbf{C}}Z}+W_{i})\}^{(12)}\\
&=\{\widehat{A}(TZ)ch(32768+128\widetilde{T_{\mathbf{C}}Z}+128W_{i}+128\wedge^{2}(\widetilde{T_{\mathbf{C}}Z}))) \}^{(12)}.\nonumber
\end{align}
\end{lem}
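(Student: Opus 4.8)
The plan is to obtain Lemma 3.3 as an immediate specialization of Theorem 3.2 to the case $c_{2}(W_{i})=0$. The identity of Theorem 3.2 is, at bottom, an equality of characteristic forms built polynomially out of the Pontryagin forms of $TZ$, the Chern character forms of $\widetilde{T_{\mathbf{C}}Z}$ and $W_{i}$, and the class $c_{2}(W_{i})$; in particular it persists after imposing the relation $c_{2}(W_{i})=0$. So the first step is simply to substitute $c_{2}(W_{i})=0$ into the conclusion of Theorem 3.2.

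Under this substitution two simplifications occur simultaneously. First, the prefactor $e^{\frac{1}{720}c_{2}(W_{i})}$ collapses to $1$, since all of its positive–degree contributions are polynomials in $c_{2}(W_{i})$ and hence vanish. Second, the factor $\frac{1}{30}c_{2}(W_{i})$ in the left–hand side of Theorem 3.2 vanishes identically, so the entire left–hand side becomes $\{\widehat{A}(TZ)\cdot 0\cdot(ch(\triangle(Z))+128)\}^{(12)}=0$. Consequently the right–hand side of Theorem 3.2 must also vanish:
\begin{align}
0=\big\{\widehat{A}(TZ)\big(ch(\triangle(Z))ch(256+2\widetilde{T_{\mathbf{C}}Z}+W_{i})+ch(32768+128\widetilde{T_{\mathbf{C}}Z}+128W_{i}+128\wedge^{2}(\widetilde{T_{\mathbf{C}}Z}))\big)\big\}^{(12)}.\nonumber
\end{align}
Finally, using linearity of the degree–$12$ component and of integration over $Z$, I would split this into the two summands and transpose one of them, which yields precisely the asserted equality.

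I do not expect a genuine obstacle here, since the argument is a direct corollary of Theorem 3.2. The only point that deserves a word of care is the meaning of "suppose $c_{2}(W_{i})=0$": one interprets it as the vanishing of the relevant Chern–Weil form (equivalently, the relation $\sum_{l}(2\pi\sqrt{-1}y_{l}^{i})^{2}=0$ from Section 2), which is exactly the hypothesis under which the $E_{2}$–correction term in $Q_{1}(Z,\tau)$ drops out; alternatively, one may rerun the proof of Lemma 3.1 and Theorem 3.2 with this relation in force from the outset, so that $Q_{1}(Z,\tau)$ is a multiple of $E_{4}(\tau)E_{6}(\tau)$ with no exponential factor, and reading off the coefficient of $q$ gives the identity directly. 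Both routes lead to the same conclusion.
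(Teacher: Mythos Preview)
Your proposal is correct and matches the paper's treatment: Lemma 3.3 is stated there without proof, as an immediate corollary of Theorem 3.2 obtained by setting $c_{2}(W_{i})=0$. The simplifications you describe (the exponential prefactor becoming $1$ and the left-hand side vanishing) are exactly what is implicitly used.
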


\subsection{ Anomaly cancellation formulas for the gauge group $E_{8}\times E_{8}$ }

Set
\begin{align}
Q_{2}(Z, \tau)=&\int_{Z}e^{\frac{1}{24}E_{2}(\tau)\cdot\frac{1}{30}(c_{2}(W_{i})+c_{2}(W_{j}))}\widehat{A}(TZ)[ch(\triangle(Z))ch(\Theta_{1}(T_{\mathbf{C}}Z))+2^{6}ch(\Theta_{2}(T_{\mathbf{C}}Z))\\
&+2^{6}ch(\Theta_{3}(T_{\mathbf{C}}Z))]\varphi(\tau)^{(16)}ch(\mathcal{V}_{i})ch(\mathcal{V}_{j}).\nonumber
\end{align}

Then we have
\begin{lem}
$Q_{2}(Z, \tau)$ is a modular form of weight $14$ over $SL_{2}(\mathbf{Z}).$
\end{lem}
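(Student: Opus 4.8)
The plan is to run, almost verbatim, the argument used for Lemma 3.1; the only new feature is the second $E_{8}$ factor, which raises the weight by $4$, from $10$ to $14$. First I would rewrite the integrand of $Q_{2}(Z,\tau)$ in terms of Jacobi theta functions. Let $\{\pm 2\pi\sqrt{-1}x_{k}\mid 1\le k\le 6\}$ be the formal Chern roots of $T_{\mathbf{C}}Z$. The three identities displayed in the proof of Lemma 3.1 express $\widehat{A}(TZ)ch(\triangle(Z))ch(\Theta_{1}(T_{\mathbf{C}}Z))$, $\widehat{A}(TZ)ch(2^{6}\Theta_{2}(T_{\mathbf{C}}Z))$ and $\widehat{A}(TZ)ch(2^{6}\Theta_{3}(T_{\mathbf{C}}Z))$ as $\prod_{k=1}^{6}\frac{2x_{k}\theta'(0,\tau)}{\theta(x_{k},\tau)}\frac{\theta_{a}(x_{k},\tau)}{\theta_{a}(0,\tau)}$ for $a=1,2,3$ respectively; and, since $\varphi(\tau)^{(16)}=\varphi(\tau)^{(8)}\cdot\varphi(\tau)^{(8)}$, the factor $\varphi(\tau)^{(16)}ch(\mathcal{V}_{i})ch(\mathcal{V}_{j})$ equals $\tfrac14\big(\sum_{a=1}^{3}\prod_{l=1}^{8}\theta_{a}(y_{l}^{i},\tau)\big)\big(\sum_{b=1}^{3}\prod_{l=1}^{8}\theta_{b}(y_{l}^{j},\tau)\big)$ by applying the formula recalled in Section 2 to each of $\mathcal{V}_{i}$ and $\mathcal{V}_{j}$. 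Writing $\Phi(\tau)$ for the resulting form-valued integrand (so $Q_{2}(Z,\tau)=\int_{Z}\Phi(\tau)$), this closed form is a bona fide function of $\tau$ for fixed, $\tau$-independent Chern roots, so substituting $\tau\mapsto\tau+1$ and $\tau\mapsto-1/\tau$ is legitimate.

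Next I would check $Q_{2}(Z,\tau+1)=Q_{2}(Z,\tau)$ using $\theta(v,\tau+1)=e^{\pi\sqrt{-1}/4}\theta(v,\tau)$, $\theta_{1}(v,\tau+1)=e^{\pi\sqrt{-1}/4}\theta_{1}(v,\tau)$, $\theta_{2}(v,\tau+1)=\theta_{3}(v,\tau)$, $\theta_{3}(v,\tau+1)=\theta_{2}(v,\tau)$ and $\theta'(0,\tau+1)=e^{\pi\sqrt{-1}/4}\theta'(0,\tau)$: the six phases from the $\theta'(0,\tau)$'s cancel the six from the $\theta(x_{k},\tau)$'s; in each ratio $\theta_{a}(x_{k},\tau)/\theta_{a}(0,\tau)$ the phase ($a=1$) or the relabelling $\theta_{2}\leftrightarrow\theta_{3}$ ($a=2,3$) leaves the symmetric sum $\sum_{a}$ fixed; the eight phases in $\prod_{l=1}^{8}\theta_{1}(y_{l}^{i},\tau)$ multiply to $e^{2\pi\sqrt{-1}}=1$ and $\theta_{2}\leftrightarrow\theta_{3}$ again only permutes the terms of $\sum_{a}$, and likewise for $\mathcal{V}_{j}$; finally $E_{2}(\tau+1)=E_{2}(\tau)$ and $c_{2}(W_{i})+c_{2}(W_{j})$ is independent of $\tau$, so the exponential prefactor is unchanged.

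The main step is $Q_{2}(Z,-1/\tau)=\tau^{14}Q_{2}(Z,\tau)$, and this is where the bookkeeping has to be done carefully. Substituting $\tau\mapsto-1/\tau$ via the $S$-transformation laws of Section 2 and their derivatives, in each factor $\frac{2x_{k}\theta'(0,\tau)}{\theta(x_{k},\tau)}\cdot\frac{\theta_{a}(x_{k},\tau)}{\theta_{a}(0,\tau)}$ the automorphy constants $\frac{1}{\sqrt{-1}}$ and $\big(\frac{\tau}{\sqrt{-1}}\big)^{1/2}$ cancel between numerator and denominator, the Gaussian $e^{\pi\sqrt{-1}\tau x_{k}^{2}}$ from $\theta(x_{k},-1/\tau)$ cancels the one from $\theta_{a}(x_{k},-1/\tau)$, and the net effect is just to replace $x_{k}$ by $\tau x_{k}$ (the induced permutation $\theta_{1}\leftrightarrow\theta_{2}$ disappears in $\sum_{a}$). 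For each $E_{8}$ block, $\prod_{l=1}^{8}\theta_{a}(y_{l}^{i},-1/\tau)=\tau^{4}e^{\pi\sqrt{-1}\tau\sum_{l}(y_{l}^{i})^{2}}\prod_{l=1}^{8}\theta_{\sigma(a)}(\tau y_{l}^{i},\tau)$, and from $\sum_{l}(2\pi\sqrt{-1}y_{l}^{i})^{2}=-\frac{1}{30}c_{2}(W_{i})$ one gets $\sum_{l}(y_{l}^{i})^{2}=\frac{1}{120\pi^{2}}c_{2}(W_{i})$, so the Gaussian equals $e^{\frac{\sqrt{-1}\tau}{120\pi}c_{2}(W_{i})}$, and similarly for $\mathcal{V}_{j}$. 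Meanwhile $E_{2}(-1/\tau)=\tau^{2}E_{2}(\tau)-\frac{6\sqrt{-1}\tau}{\pi}$ contributes the factor $e^{-\frac{\sqrt{-1}\tau}{120\pi}(c_{2}(W_{i})+c_{2}(W_{j}))}$, which exactly cancels the product $e^{\frac{\sqrt{-1}\tau}{120\pi}c_{2}(W_{i})}e^{\frac{\sqrt{-1}\tau}{120\pi}c_{2}(W_{j})}$. What is left is a factor $\tau^{4}\cdot\tau^{4}=\tau^{8}$ from the two $E_{8}$ blocks times the original integrand with all Chern roots $x_{k},y_{l}^{i},y_{l}^{j}$ (hence also $c_{2}(W_{i}),c_{2}(W_{j})$, hence the $\frac{\tau^{2}}{24}E_{2}(\tau)$ in the exponent) scaled by $\tau$. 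Since scaling all Chern roots by $\tau$ multiplies the degree-$2j$ component of a form by $\tau^{j}$, the degree-$2j$ part of $\Phi(-1/\tau)$ is $\tau^{8+j}$ times that of $\Phi(\tau)$; taking $j=6$, the top degree on the $12$-manifold $Z$, and integrating gives $Q_{2}(Z,-1/\tau)=\tau^{14}Q_{2}(Z,\tau)$.

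Finally, since $S$ and $T$ generate $SL_{2}(\mathbf{Z})$, the two laws above show $Q_{2}(Z,\cdot)$ transforms with weight $14$ over $SL_{2}(\mathbf{Z})$; holomorphicity on $\mathbf{H}$ is clear since only finitely many terms contribute to the degree-$12$ component and all building blocks are holomorphic, each block is a $q^{1/2}$-series with nonnegative exponents so there is no pole at the cusp, and the $T$-invariance forces only integral powers of $q$, so $Q_{2}(Z,\cdot)$ is holomorphic at $\infty$. I expect the only genuinely delicate point to be the $S$-step bookkeeping — verifying that the $E_{2}(\tau)$ prefactor, now carrying $c_{2}(W_{i})+c_{2}(W_{j})$, absorbs exactly the combined Gaussian anomaly of all sixteen theta functions $\theta_{a}(y_{l}^{i},\tau)$ and $\theta_{b}(y_{l}^{j},\tau)$ — but this is the precise analogue of the one-$E_{8}$ computation in Lemma 3.1.
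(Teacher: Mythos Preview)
Your proposal is correct and follows the same route as the paper: rewrite $Q_{2}(Z,\tau)$ via the theta-function identities (3.5)--(3.7) and (2.29), then verify the $T$- and $S$-transformation laws using (2.14)--(2.22), (2.25)--(2.26). The paper's own proof is much terser---it simply records the theta expression and asserts the two transformation identities---whereas you spell out the phase and Gaussian bookkeeping (in particular the cancellation between the $E_{2}$ quasimodular correction and the combined Gaussian from the sixteen $\theta_{a}(y_{l},\tau)$'s) and add the cusp-holomorphicity check; these are welcome details but not a different method.
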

\begin{proof}
By applying the formula shown in (3.5)-(3.7), we can calculate
\begin{align}
&Q_{2}(Z, \tau)=\\
&e^{\frac{1}{24}E_{2}(\tau)\cdot\frac{1}{30}(c_{2}(W_{i})+c_{2}(W_{j}))}\prod_{k=1}^{6}\frac{2x_{k}\theta'(0, \tau)}{\theta(x_{k}, \tau)}\Big(\prod_{k=1}^{6}\frac{\theta_{1}(x_{k}, \tau)}{\theta_{1}(0, \tau)}+\prod_{k=1}^{6}\frac{\theta_{2}(x_{k}, \tau)}{\theta_{2}(0, \tau)}+\prod_{k=1}^{6}\frac{\theta_{3}(x_{k}, \tau)}{\theta_{3}(0, \tau)}\Big)\nonumber\\
&\cdot\frac{1}{4}\Big(\prod_{l=1}^{8}\theta_{1}(y_{l}^{i}, \tau)+\prod_{l=1}^{8}\theta_{2}(y_{l}^{i}, \tau)+\prod_{l=1}^{8}\theta_{3}(y_{l}^{i}, \tau)\Big)\Big(\prod_{l=1}^{8}\theta_{1}(y_{l}^{j}, \tau)+\prod_{l=1}^{8}\theta_{2}(y_{l}^{j}, \tau)+\prod_{l=1}^{8}\theta_{3}(y_{l}^{j}, \tau)\Big).\nonumber
\end{align}
On the basis of (2.25) and (2.26), we have 
\begin{align}
&Q_{2}(Z, \tau+1)=Q_{2}(Z, \tau),\\
&Q_{2}(Z, -\frac{1}{\tau})=\tau^{14}Q_{2}(Z, \tau).
\end{align}
Hence $Q_{2}(Z, \tau)$ is a modular form over $SL_{2}(\mathbf{Z})$ with the weight $14.$
\end{proof}

\begin{thm}
For $12$ dimensional oriented smooth closed manifold, we can assert that
\begin{align}
&\Big\{e^{\frac{1}{720}(c_{2}(W_{i})+c_{2}(W_{j}))}\widehat{A}(TZ)\frac{1}{30}(c_{2}(W_{i})+c_{2}(W_{j}))(ch(\triangle(Z))+128)\Big\}^{(12)}\\
&=\{e^{\frac{1}{720}(c_{2}(W_{i})+c_{2}(W_{j}))}\widehat{A}(TZ)(ch(\triangle(Z))ch(8+2\widetilde{T_{\mathbf{C}}Z}+W_{i}+W_{j})\nonumber\\
&+ch(1024+128\widetilde{T_{\mathbf{C}}Z}+128W_{i}+128W_{j}+128\wedge^{2}(\widetilde{T_{\mathbf{C}}Z}))) \}^{(12)}.\nonumber
\end{align}
\end{thm}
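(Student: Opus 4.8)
The plan is to rerun the proof of Theorem 3.2, now with the weight-$14$ modularity of Lemma 3.5 in place of the weight-$10$ modularity of Lemma 3.1. The one structural fact I will use is that the space of modular forms of weight $14$ over $SL_2(\mathbf{Z})$ is one-dimensional — there is no cusp form of weight $14$ — and is spanned by $E_4(\tau)^2E_6(\tau)=1-24q+\cdots$ (this plays the role that $E_4(\tau)E_6(\tau)=1-264q+\cdots$ plays in the weight-$10$ case). Hence $Q_2(Z,\tau)$ is a scalar multiple of $E_4(\tau)^2E_6(\tau)$, and it suffices to read off the constant term and the coefficient of $q$ in its $q$-expansion.

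Accordingly I would expand the integrand of $Q_2$ as a $q$-series of characteristic forms through order $q$, keeping the formal Chern roots $\{\pm2\pi\sqrt{-1}x_k\}$ of $T_{\mathbf{C}}Z$ exactly as in the proof of Theorem 3.2. From the $\Theta_\ell$-expansions displayed there, the half-integral powers in $ch(\triangle(Z))ch(\Theta_1(T_{\mathbf{C}}Z))+2^6ch(\Theta_2(T_{\mathbf{C}}Z))+2^6ch(\Theta_3(T_{\mathbf{C}}Z))$ cancel, leaving $(ch(\triangle(Z))+128)+q\big(2\,ch(\triangle(Z))ch(\widetilde{T_{\mathbf{C}}Z})+128\,ch(\widetilde{T_{\mathbf{C}}Z}+\wedge^2(\widetilde{T_{\mathbf{C}}Z}))\big)+\cdots$. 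From $E_2(\tau)=1-24q+\cdots$ one gets
\[
e^{\frac{1}{24}E_2(\tau)\cdot\frac{1}{30}(c_2(W_i)+c_2(W_j))}=e^{\frac{1}{720}(c_2(W_i)+c_2(W_j))}\Big(1-\tfrac{1}{30}(c_2(W_i)+c_2(W_j))\,q+\cdots\Big).
\]
Finally, from $\varphi(\tau)^{(8)}=\prod_{n\ge1}(1-q^n)^8=1-8q+\cdots$ together with $ch(\mathcal{V}_i)=1+ch(W_i)\,q+\cdots$ (its $q^0$-piece the rank-one trivial bundle and its $q^1$-piece the adjoint bundle $W_i=P_i\times_{\rho_1}V_1$) one obtains $\varphi(\tau)^{(8)}ch(\mathcal{V}_i)=1+ch(W_i-8)\,q+\cdots$, hence $\varphi(\tau)^{(16)}ch(\mathcal{V}_i)ch(\mathcal{V}_j)=1+ch(W_i+W_j-16)\,q+\cdots$.

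Multiplying the three expansions, the $q^0$-term of the integrand of $Q_2$ is $e^{\frac{1}{720}(c_2(W_i)+c_2(W_j))}\widehat{A}(TZ)(ch(\triangle(Z))+128)$, which fixes the scalar; its $q^1$-term, collected in the manner of the displayed $q$-expansion in the proof of Theorem 3.2 but with $W_i$ replaced by $W_i+W_j$, the constant $-8$ by $-16$, the constant $-1024$ by $-2048=-128\cdot16$, and $c_2(W_i)$ by $c_2(W_i)+c_2(W_j)$, equals $e^{\frac{1}{720}(c_2(W_i)+c_2(W_j))}\widehat{A}(TZ)$ times $ch(\triangle(Z))ch(-16+2\widetilde{T_{\mathbf{C}}Z}+W_i+W_j)+ch(-2048+128\widetilde{T_{\mathbf{C}}Z}+128W_i+128W_j+128\wedge^2(\widetilde{T_{\mathbf{C}}Z}))-\tfrac{1}{30}(c_2(W_i)+c_2(W_j))(ch(\triangle(Z))+128)$. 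Since $E_4(\tau)^2E_6(\tau)=1-24q+\cdots$, this $q^1$-term must be $-24$ times the $q^0$-term; adding $24(ch(\triangle(Z))+128)$ to the bracket turns $-16$ into $-16+24=8$ and $-2048$ into $-2048+24\cdot128=1024$, and transferring the $\tfrac{1}{30}(c_2(W_i)+c_2(W_j))$-contribution to the other side produces exactly the asserted identity of degree-$12$ forms. The argument involves no new idea beyond Theorem 3.2 — the second $E_8$-factor only raises the modular weight from $10$ to $14$ while the ambient space of modular forms stays one-dimensional — and the step demanding the most care is the low-order $q$-bookkeeping: that the $q^{1/2}$-terms cancel, that the $q$-coefficient of $\varphi(\tau)^{(16)}ch(\mathcal{V}_i)ch(\mathcal{V}_j)$ is exactly $ch(W_i)+ch(W_j)-16$ (each $E_8$-factor contributing one copy of its adjoint bundle and a $-8$), and that the $\tfrac{1}{30}(c_2(W_i)+c_2(W_j))$-term is tracked correctly, since it is precisely this term that produces the left side of the formula.
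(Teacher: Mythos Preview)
Your proposal is correct and follows essentially the same approach as the paper: expand the integrand of $Q_2$ through order $q$, use that the space of weight-$14$ modular forms over $SL_2(\mathbf{Z})$ is spanned by $E_4(\tau)^2E_6(\tau)=1-24q+\cdots$, and compare the constant term with the coefficient of $q$. Your write-up is in fact more detailed than the paper's, which simply records the $q$-expansion and refers back to the proof of Theorem~3.2.
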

\begin{proof}
Via simple calculations, we can obtain
\begin{align}
&e^{\frac{1}{24}E_{2}(\tau)\cdot\frac{1}{30}(c_{2}(W_{i})+c_{2}(W_{j}))}\widehat{A}(TZ)[ch(\triangle(Z))ch(\Theta_{1}(T_{\mathbf{C}}Z))+2^{6}ch(\Theta_{2}(T_{\mathbf{C}}Z))+2^{6}ch(\Theta_{3}(T_{\mathbf{C}}Z))]\\
&\cdot\varphi(\tau)^{(16)}ch(\mathcal{V}_{i})ch(\mathcal{V}_{j})=e^{\frac{1}{720}(c_{2}(W_{i})+c_{2}(W_{j}))}\widehat{A}(TZ)(ch(\triangle(Z))+128)+q\Big(e^{\frac{1}{720}(c_{2}(W_{i})+c_{2}(W_{j}))}\nonumber\\
&\cdot\widehat{A}(TZ)\big(ch(\triangle(Z))ch(-16+2\widetilde{T_{\mathbf{C}}Z}+W_{i}+W_{j})+ch(-2048+128\widetilde{T_{\mathbf{C}}Z}+128W_{i}+128W_{j}\nonumber\\
&+128\wedge^{2}(\widetilde{T_{\mathbf{C}}Z}))-\frac{1}{30}(c_{2}(W_{i})+c_{2}(W_{j}))(ch(\triangle(Z))+128)\big)\Big)+\cdot\cdot\cdot.\nonumber
\end{align}

$Q_{2}(Z, \tau)$ must be a multiple of
\begin{align}
E_{4}(\tau)^{2}E_{6}(\tau)=1-24q+\cdot\cdot\cdot.
\end{align}

As in the proof of Theorem 3.2, we can get Theorem 3.5.
\end{proof}

\begin{lem}
Note that if $c_{2}(W_{i})+c_{2}(W_{j})=0,$ then we have
\begin{align}
&-\{\widehat{A}(TZ)ch(\triangle(Z))ch(8+2\widetilde{T_{\mathbf{C}}Z}+W_{i}+W_{j}) \}^{(12)}\\
&=\{\widehat{A}(TZ)ch(1024+128\widetilde{T_{\mathbf{C}}Z}+128W_{i}+128W_{j}+128\wedge^{2}(\widetilde{T_{\mathbf{C}}Z})) \}^{(12)}.\nonumber
\end{align}
\end{lem}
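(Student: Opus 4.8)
The plan is to obtain Lemma 3.6 as an immediate specialization of Theorem 3.5, in exactly the same way Lemma 3.3 follows from Theorem 3.2. No fresh modular-invariance argument is needed: all of the work has already been done in Lemma 3.4 and Theorem 3.5 (the proof that $Q_2(Z,\tau)$ is a weight-$14$ modular form over $SL_2(\mathbf{Z})$, hence a constant multiple of $E_4(\tau)^2E_6(\tau)$, and the comparison of the constant and $q^1$ coefficients). One simply feeds the hypothesis $c_2(W_i)+c_2(W_j)=0$ into the identity already established.

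Concretely, I would argue in three short steps. First, under the hypothesis the characteristic form $e^{\frac{1}{720}(c_2(W_i)+c_2(W_j))}$ equals $1$, since its exponent is the zero form (equivalently, if one reads the hypothesis cohomologically, its class is trivial and every expression below is integrated over the closed manifold $Z$, where only cohomology classes enter). Second, the left-hand side of the identity in Theorem 3.5 carries the overall factor $\tfrac{1}{30}(c_2(W_i)+c_2(W_j))$, which vanishes; hence that left-hand side is $0$. Third, it follows that the right-hand side of Theorem 3.5 is $0$ as well, that is,
\[ \Big\{\widehat{A}(TZ)\big(ch(\triangle(Z))ch(8+2\widetilde{T_{\mathbf{C}}Z}+W_{i}+W_{j})+ch(1024+128\widetilde{T_{\mathbf{C}}Z}+128W_{i}+128W_{j}+128\wedge^{2}(\widetilde{T_{\mathbf{C}}Z}))\big)\Big\}^{(12)}=0. \]
Transposing the second summand to the other side of this equation gives precisely the asserted formula.

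I do not expect any genuine obstacle here; the statement is essentially a corollary, and the only point to be careful about is the bookkeeping of the degree-$12$ component together with the reading of the hypothesis $c_2(W_i)+c_2(W_j)=0$ at the level of characteristic forms, which makes the exponential prefactor literally $1$ so that the argument goes through verbatim. As an alternative that does not invoke Theorem 3.5 as a black box, one may instead return to the proof of Lemma 3.4: under the hypothesis the prefactor is $1$, so writing $Q_2(Z,\tau)=c\cdot E_4(\tau)^2E_6(\tau)$ and equating the constant and $q^1$ coefficients of both sides (using $E_4(\tau)^2E_6(\tau)=1-24q+\cdots$ and the $q$-expansion of the integrand recorded in the proof of Theorem 3.5), an elementary rearrangement of the Chern characters — absorbing the $-24\,ch(\triangle(Z)+128)$ term into the rank shifts $-16+24=8$ and $-2048+24\cdot128=1024$ — again produces the claimed identity.
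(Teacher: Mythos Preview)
Your proposal is correct and matches the paper's approach: Lemma~3.6 is stated in the paper without proof, as an immediate corollary of Theorem~3.5 obtained by setting $c_{2}(W_{i})+c_{2}(W_{j})=0$, which is exactly the specialization you carry out. The alternative bookkeeping you sketch (re-deriving it from the $q$-expansion and $E_4^2E_6=1-24q+\cdots$) is also fine but unnecessary, since Theorem~3.5 already encodes that computation.
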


\subsection{ Anomaly cancellation formulas for the gauge group $E_{8}\times E_{8}\times E_{8}$ }

Write
\begin{align}
Q_{3}(Z, \tau)=&\int_{Z}e^{\frac{1}{24}E_{2}(\tau)\cdot\frac{1}{30}(c_{2}(W_{i})+c_{2}(W_{j})+c_{2}(W_{r}))}\widehat{A}(TZ)[ch(\triangle(Z))ch(\Theta_{1}(T_{\mathbf{C}}Z))\\
&+2^{6}ch(\Theta_{2}(T_{\mathbf{C}}Z))+2^{6}ch(\Theta_{3}(T_{\mathbf{C}}Z))]\varphi(\tau)^{(24)}ch(\mathcal{V}_{i})ch(\mathcal{V}_{j})ch(\mathcal{V}_{r}).\nonumber
\end{align}

We can now state our main result of this section as follows.
\begin{lem}
$Q_{3}(Z, \tau)$ is a modular form of weight $18$ over $SL_{2}(\mathbf{Z}).$
\end{lem}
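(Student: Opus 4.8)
The plan is to follow exactly the template established in the proofs of Lemma 3.1 and Lemma 3.4, now with three $E_{8}$ factors. First I would fix formal Chern roots $\{\pm 2\pi\sqrt{-1}x_{k}\,|\,1\leq k\leq 6\}$ for $T_{\mathbf{C}}Z$ and invoke the identities (3.5)--(3.7) expressing $\widehat{A}(TZ)ch(\triangle(Z))ch(\Theta_{1})$, $2^{6}\widehat{A}(TZ)ch(\Theta_{2})$, $2^{6}\widehat{A}(TZ)ch(\Theta_{3})$ as products $\prod_{k=1}^{6}\frac{2x_{k}\theta'(0,\tau)}{\theta(x_{k},\tau)}\cdot\frac{\theta_{a}(x_{k},\tau)}{\theta_{a}(0,\tau)}$ for $a=1,2,3$. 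Combining these with the defining identity (2.32) for each of the three twists $\mathcal{V}_{i},\mathcal{V}_{j},\mathcal{V}_{r}$, one obtains the closed-form expression
\begin{align}
Q_{3}(Z,\tau)=&\,e^{\frac{1}{24}E_{2}(\tau)\cdot\frac{1}{30}(c_{2}(W_{i})+c_{2}(W_{j})+c_{2}(W_{r}))}\prod_{k=1}^{6}\frac{2x_{k}\theta'(0,\tau)}{\theta(x_{k},\tau)}\Big(\sum_{a=1}^{3}\prod_{k=1}^{6}\frac{\theta_{a}(x_{k},\tau)}{\theta_{a}(0,\tau)}\Big)\nonumber\\
&\cdot\frac{1}{8}\Big(\sum_{a=1}^{3}\prod_{l=1}^{8}\theta_{a}(y_{l}^{i},\tau)\Big)\Big(\sum_{a=1}^{3}\prod_{l=1}^{8}\theta_{a}(y_{l}^{j},\tau)\Big)\Big(\sum_{a=1}^{3}\prod_{l=1}^{8}\theta_{a}(y_{l}^{r},\tau)\Big),
\end{align}
which makes manifest that $Q_{3}$ is holomorphic in $\tau$ on $\mathbf{H}$.

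Next I would establish the two transformation laws $Q_{3}(Z,\tau+1)=Q_{3}(Z,\tau)$ and $Q_{3}(Z,-\tfrac{1}{\tau})=\tau^{18}Q_{3}(Z,\tau)$, which together with holomorphy give modularity of weight $18$ over $SL_{2}(\mathbf{Z})$ since $S$ and $T$ generate the group. For the $T$-action, the phase factors $e^{\pi\sqrt{-1}/4}$ from each $\theta,\theta_{1}$ and the permutation $\theta_{2}\leftrightarrow\theta_{3}$ combine with the invariance $E_{2}(\tau+1)=E_{2}(\tau)$ exactly as in Lemma 3.1 and 3.4; the only new bookkeeping is that the $E_{8}$-block now contributes $24$ theta factors instead of $8$ or $16$, but the symmetric sum over $a\in\{1,2,3\}$ is invariant under $\theta_{2}\leftrightarrow\theta_{3}$ so no new phases survive. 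For the $S$-action I would use (2.22)--(2.25), the derived formula (2.31) for $\theta'(0,-\tfrac1\tau)$, and the $E_{2}$ law (2.34); the exponential prefactor $e^{\pi\sqrt{-1}\tau v^{2}}$ produced by each theta is killed by the Gaussian-type correction coming from the anomaly exponential together with $\sum_{l}(2\pi\sqrt{-1}y_{l})^{2}=-\tfrac{1}{30}c_{2}(W)$ and $\sum_{k}(2\pi\sqrt{-1}x_{k})^{2}$ absorbed through $E_{2}$, precisely the cancellation mechanism used in \cite{HLZ} and in the earlier lemmas here. Counting the powers of $(\tau/\sqrt{-1})^{1/2}$: the $\widehat A$-block contributes weight $6$ (six factors $\frac{2x_k\theta'(0,\tau)}{\theta(x_k,\tau)}$ each of weight $1$), and the three $E_{8}$-blocks contribute $8+8+8=24$ half-powers, i.e. weight $12$, giving total weight $18$.

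The step I expect to be the main obstacle is not conceptual but a careful verification that the weight arithmetic and the Gaussian/anomaly cancellation still close up with three $E_{8}$ factors: one must check that $e^{\frac{1}{24}E_{2}(-1/\tau)\cdot\frac{1}{30}(c_{2}(W_{i})+c_{2}(W_{j})+c_{2}(W_{r}))}$ together with the accumulated quadratic exponentials $\prod_{a}e^{\pi\sqrt{-1}\tau(y_{l}^{\bullet})^{2}}$ from all $24$ theta factors, plus the six from the $\widehat A$-block, produce exactly the correct $\tau^{18}$ and no residual exponential. Since each $E_{8}$-block is individually the Chen--Han--Zhang type weight-$8$ object whose modularity with the matching anomaly exponential is already proved in \cite{HLZ,WY}, and the tangent block is the standard weight-$6$ Witten-type form, the composite statement follows by multiplicativity of the transformation laws; I would phrase the proof as ``combining the computations of Lemma 3.1 and Lemma 3.4 with an additional factor $\varphi(\tau)^{(8)}ch(\mathcal{V}_{r})$'' and then simply record equations (3.x)-(3.y) as above, concluding that $Q_{3}(Z,\tau)$ is a modular form of weight $18$ over $SL_{2}(\mathbf{Z})$.
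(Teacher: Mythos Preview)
Your proposal is correct and follows essentially the same approach as the paper: express $Q_{3}(Z,\tau)$ in theta-function form via (3.5)--(3.7) and (2.32), then verify invariance under $T$ and the weight-$18$ transformation under $S$ using the theta and $E_{2}$ transformation laws. The paper's own proof is terser---it simply displays the theta expression and asserts the two transformation identities by citing (2.25)--(2.26)---while you spell out the weight bookkeeping and the Gaussian/anomaly cancellation in more detail, but the argument is the same.
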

\begin{proof}
It is easily seen that
\begin{align}
&Q_{3}(Z, \tau)=\\
&e^{\frac{1}{24}E_{2}(\tau)\cdot\frac{1}{30}(c_{2}(W_{i})+c_{2}(W_{j})+c_{2}(W_{r}))}\prod_{k=1}^{6}\frac{2x_{k}\theta'(0, \tau)}{\theta(x_{k}, \tau)}\Big(\prod_{k=1}^{6}\frac{\theta_{1}(x_{k}, \tau)}{\theta_{1}(0, \tau)}+\prod_{k=1}^{6}\frac{\theta_{2}(x_{k}, \tau)}{\theta_{2}(0, \tau)}+\prod_{k=1}^{6}\frac{\theta_{3}(x_{k}, \tau)}{\theta_{3}(0, \tau)}\Big)\nonumber\\
&\cdot\frac{1}{8}\Big(\prod_{l=1}^{8}\theta_{1}(y_{l}^{i}, \tau)+\prod_{l=1}^{8}\theta_{2}(y_{l}^{i}, \tau)+\prod_{l=1}^{8}\theta_{3}(y_{l}^{i}, \tau)\Big)\Big(\prod_{l=1}^{8}\theta_{1}(y_{l}^{j}, \tau)+\prod_{l=1}^{8}\theta_{2}(y_{l}^{j}, \tau)+\prod_{l=1}^{8}\theta_{3}(y_{l}^{j}, \tau)\Big)\nonumber\\
&\cdot\Big(\prod_{l=1}^{8}\theta_{1}(y_{l}^{r}, \tau)+\prod_{l=1}^{8}\theta_{2}(y_{l}^{r}, \tau)+\prod_{l=1}^{8}\theta_{3}(y_{l}^{r}, \tau)\Big).\nonumber
\end{align}
By (2.25) and (2.26), we have
\begin{align}
&Q_{3}(Z, \tau+1)=Q_{3}(Z, \tau),\\
&Q_{3}(Z, -\frac{1}{\tau})=\tau^{18}Q_{3}(Z, \tau).
\end{align}
Clearly, we can get Lemma 3.7. 
\end{proof}

\begin{thm}
For $12$ dimensional oriented smooth closed manifold, we deduce that
\begin{align}
&\Big\{e^{\frac{1}{720}(c_{2}(W_{i})+c_{2}(W_{j})+c_{2}(W_{r}))}\widehat{A}(TZ)\Big(\frac{1}{30}(c_{2}(W_{i})+c_{2}(W_{j})+c_{2}(W_{r}))ch(\triangle(Z))ch(-504-2\widetilde{T_{\mathbf{C}}Z}\\
&-W_{i}-W_{j}-W_{r})+\frac{1}{30}(c_{2}(W_{i})+c_{2}(W_{j})+c_{2}(W_{r}))ch(-64512-128\widetilde{T_{\mathbf{C}}Z}-128W_{i}-128W_{j}\nonumber\\
&-128W_{r}-128\wedge^{2}(\widetilde{T_{\mathbf{C}}Z}))+\frac{1}{2}\big(-6+\frac{1}{30}(c_{2}(W_{i})+c_{2}(W_{j})+c_{2}(W_{r}))\big)\frac{1}{30}(c_{2}(W_{i})+c_{2}(W_{j})\nonumber\\
&+c_{2}(W_{r}))(ch(\triangle(Z))+128)\Big)\Big\}^{(12)}=\big\{e^{\frac{1}{720}(c_{2}(W_{i})+c_{2}(W_{j})+c_{2}(W_{r}))}\widehat{A}(TZ)\big(ch(\triangle(Z))ch(-73764\nonumber\\
&-504W_{i}-504W_{j}-504W_{r}-\overline{W_{i}}-\overline{W_{j}}-\overline{W_{r}}-1058\widetilde{T_{\mathbf{C}}Z}-\widetilde{T_{\mathbf{C}}Z}\otimes\widetilde{T_{\mathbf{C}}Z}-S^{2}(\widetilde{T_{\mathbf{C}}Z})\nonumber\\
&-\wedge^{2}(\widetilde{T_{\mathbf{C}}Z}))-ch(\triangle(Z))ch(2\widetilde{T_{\mathbf{C}}Z})ch(-24+W_{i}+W_{j}+W_{r})-ch(\triangle(Z))ch(W_{i})ch(W_{j})\nonumber\\
&-ch(\triangle(Z))ch(W_{i})ch(W_{r})-ch(\triangle(Z))ch(W_{j})ch(W_{r})+ch(-9409536-67584\widetilde{T_{\mathbf{C}}Z}\nonumber\\
&-67584W_{i}-67584W_{j}-67584W_{r}-67584\wedge^{2}(\widetilde{T_{\mathbf{C}}Z}))\big)\big\}^{(12)}.\nonumber
\end{align}
\end{thm}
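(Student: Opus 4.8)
\noindent\emph{Proof sketch / plan.}
The plan is to run the argument behind Theorems 3.2 and 3.5, carried one further power of $q$. By Lemma 3.7, $Q_{3}(Z,\tau)$ is a modular form of weight $18$ over $SL_{2}(\mathbf{Z})$; in fact the computation proving Lemma 3.7 shows more, namely that the degree-$12$ component $\Psi(\tau)$ of the differential form appearing in the integral defining $Q_{3}$ is itself a weight-$18$ modular form over $SL_{2}(\mathbf{Z})$ with values in the space of $12$-forms on $Z$: under $\tau\mapsto-1/\tau$ each of the three $E_{8}$ theta-products contributes a factor $\tau^{4}$ and the degree-$12$ rescaling $x_{k}\mapsto\tau x_{k}$ of the tangential factor contributes $\tau^{6}$, the Eisenstein correction $e^{\frac{1}{24}E_{2}(\tau)\cdot\frac{1}{30}(\cdots)}$ having been inserted precisely so as to cancel the anomalous exponential produced by the transformation of the $E_{8}$ theta functions. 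Hence the $q$-expansion coefficients of $\Psi(\tau)$ obey every linear relation satisfied by the $q$-expansion coefficients of a scalar weight-$18$ modular form over $SL_{2}(\mathbf{Z})$. The space of such forms is two-dimensional, spanned by $E_{4}(\tau)^{3}E_{6}(\tau)=1+216q-200232q^{2}+\cdots$ and $E_{6}(\tau)^{3}=1-1512q+712152q^{2}+\cdots$; comparing these two $q$-expansions yields at once the universal relation
\[
(\text{coefficient of }q^{2})=-86184\,(\text{coefficient of }q^{0})-528\,(\text{coefficient of }q^{1})
\]
for every weight-$18$ modular form over $SL_{2}(\mathbf{Z})$. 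Writing $\Psi(\tau)=A_{0}+A_{1}q+A_{2}q^{2}+\cdots$, the theorem is then exactly the identity $A_{2}+86184\,A_{0}+528\,A_{1}=0$ of degree-$12$ characteristic forms, rearranged into the two displayed sides.

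The substance is the computation of $A_{0},A_{1},A_{2}$, i.e.\ of the $q$-expansion of the integrand of $Q_{3}$ through order $q^{2}$. I would assemble it from four ingredients. First, extend the expansions of $\Theta_{1},\Theta_{2},\Theta_{3}$ recorded in the proof of Theorem 3.2 by one more power of $q$; the new $q^{2}$-terms introduce $S^{2}(\widetilde{T_{\mathbf{C}}Z})$, $\wedge^{2}(\widetilde{T_{\mathbf{C}}Z})$ and $\widetilde{T_{\mathbf{C}}Z}\otimes\widetilde{T_{\mathbf{C}}Z}$ (for instance the coefficient of $q^{2}$ in $\Theta_{1}(T_{\mathbf{C}}Z)$ is $S^{2}(\widetilde{T_{\mathbf{C}}Z})+\wedge^{2}(\widetilde{T_{\mathbf{C}}Z})+\widetilde{T_{\mathbf{C}}Z}\otimes\widetilde{T_{\mathbf{C}}Z}+2\widetilde{T_{\mathbf{C}}Z}$). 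Second, set $c:=c_{2}(W_{i})+c_{2}(W_{j})+c_{2}(W_{r})$; from $E_{2}(\tau)=1-24q-72q^{2}+\cdots$ one expands the prefactor as
\[
e^{\frac{1}{24}E_{2}(\tau)\cdot\frac{1}{30}c}=e^{\frac{1}{720}c}\Big(1-\tfrac{1}{30}c\,q+\tfrac{1}{2}\big(-6+\tfrac{1}{30}c\big)\tfrac{1}{30}c\,q^{2}+\cdots\Big),
\]
keeping the full $q^{2}$-coefficient here because $c^{3}$ is now a degree-$12$ form and so contributes. Third, using $\varphi(\tau)^{8}=1-8q+20q^{2}+\cdots$ and $\mathcal{V}_{s}=\mathbf{C}+W_{s}q+\overline{W_{s}}q^{2}+\cdots$, expand
\[
\varphi(\tau)^{24}ch(\mathcal{V}_{i})ch(\mathcal{V}_{j})ch(\mathcal{V}_{r})=\prod_{s\in\{i,j,r\}}\varphi(\tau)^{8}ch(\mathcal{V}_{s}),\qquad \varphi(\tau)^{8}ch(\mathcal{V}_{s})=1+(ch(W_{s})-8)q+(ch(\overline{W_{s}})-8\,ch(W_{s})+20)q^{2}+\cdots;
\]
the $q^{2}$-term of the triple product then brings in the new characters $ch(\overline{W_{s}})$ and the cross terms $ch(W_{s})ch(W_{t})$. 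Fourth, multiply in $\widehat{A}(TZ)$ and $ch(\triangle(Z))$, which carry no $q$, and collect. One gets $A_{0}=\{e^{\frac{1}{720}c}\widehat{A}(TZ)(ch(\triangle(Z))+128)\}^{(12)}$, an $A_{1}$ of the shape already visible in the proofs of Theorems 3.2 and 3.5, and an $A_{2}$ built from the six ways of distributing the two $q$'s among the four factors.

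Substituting these into $A_{2}=-86184\,A_{0}-528\,A_{1}$ and sorting the resulting identity of degree-$12$ forms gives the statement. The terms carrying an overall factor $\tfrac{1}{30}c$ --- the contributions of the $q^{1}$- and $q^{2}$-coefficients of the prefactor together with the $-\tfrac{1}{30}c\,(ch(\triangle(Z))+128)$ already present inside $A_{1}$ --- combine, after rewriting a characteristic class as $ch$ of a virtual bundle of the appropriate rank (for instance $-ch(2\widetilde{T_{\mathbf{C}}Z}+W_{i}+W_{j}+W_{r}-24)-528=ch(-504-2\widetilde{T_{\mathbf{C}}Z}-W_{i}-W_{j}-W_{r})$ since $528-24=504$, and likewise $67584=128\cdot528$, $64512=128\cdot504$, $1058=2+2\cdot528$), into the left-hand side; while the genuine characteristic-form contributions --- those of the $q^{2}$-coefficient of $\widehat{A}(TZ)[\,ch(\triangle(Z))ch(\Theta_{1})+2^{6}ch(\Theta_{2})+2^{6}ch(\Theta_{3})\,]\varphi(\tau)^{24}ch(\mathcal{V}_{i})ch(\mathcal{V}_{j})ch(\mathcal{V}_{r})$ together with $86184\,A_{0}$ and the remainder of $528\,A_{1}$ --- regroup into $ch(\triangle(Z))ch(\cdots)$, the products $ch(\triangle(Z))ch(2\widetilde{T_{\mathbf{C}}Z})ch(-24+W_{i}+W_{j}+W_{r})$ and $ch(\triangle(Z))ch(W_{s})ch(W_{t})$, and a single $ch(\cdots)$ forming the right-hand side.

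The hard part is purely organizational: keeping the order-$q^{2}$ bookkeeping straight. In the $E_{8}$ and $E_{8}\times E_{8}$ cases the relevant spaces of modular forms, of weights $10$ and $14$, are one-dimensional, so only the $q^{0}$- and $q^{1}$-coefficients enter; here every factor --- the $\Theta_{i}$, the prefactor (now with a genuinely quadratic term in $c$), and the triple $E_{8}$-twisted Chern character --- must be pushed to order $q^{2}$, and a sizeable list of terms, among them the new $\overline{W_{s}}$, the cross terms $ch(W_{s})ch(W_{t})$, and the degree-two symmetric, exterior and tensor powers of $\widetilde{T_{\mathbf{C}}Z}$, must be packaged into the compact virtual-bundle form of the statement; checking that all the ranks and numerical coefficients close up is where the effort goes.
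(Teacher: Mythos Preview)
Your proposal is correct and follows essentially the same route as the paper: expand the integrand of $Q_{3}(Z,\tau)$ in $q$ through order $q^{2}$, invoke the weight-$18$ modularity from Lemma~3.7 to write $Q_{3}=\lambda_{1}E_{4}^{3}E_{6}+\lambda_{2}E_{6}^{3}$, and compare the coefficients of $1,q,q^{2}$. Your version is somewhat more explicit than the paper's---you extract the universal linear relation $a_{2}=-86184\,a_{0}-528\,a_{1}$ directly and you make explicit that the modularity holds already at the level of the degree-$12$ form (which is what the theorem, stated with $\{\cdot\}^{(12)}$, actually needs)---but the underlying argument is the same.
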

\begin{proof}
We denote $\frac{1}{30}(c_{2}(W_{i})+c_{2}(W_{j})+c_{2}(W_{r}))$ briefly by $A.$
A trivial verification shows that
\begin{align}
&e^{\frac{1}{24}E_{2}(\tau)A}\widehat{A}(TZ)[ch(\triangle(Z))ch(\Theta_{1}(T_{\mathbf{C}}Z))+2^{6}ch(\Theta_{2}(T_{\mathbf{C}}Z))+2^{6}ch(\Theta_{3}(T_{\mathbf{C}}Z))]\\
&\cdot\varphi(\tau)^{(24)}ch(\mathcal{V}_{i})ch(\mathcal{V}_{j})ch(\mathcal{V}_{r})=e^{\frac{1}{24}A}\widehat{A}(TZ)(ch(\triangle(Z))+128)+qe^{\frac{1}{24}A}\widehat{A}(TZ)\big(ch(\triangle(Z))\nonumber\\
&\cdot ch(-24+2\widetilde{T_{\mathbf{C}}Z}+W_{i}+W_{j}+W_{r})+ch(-3072+128\widetilde{T_{\mathbf{C}}Z}+128W_{i}+128W_{j}+128W_{r}\nonumber\\
&+128\wedge^{2}(\widetilde{T_{\mathbf{C}}Z}))-A(ch(\triangle(Z))+128)\big)+q^{2}e^{\frac{1}{24}A}\widehat{A}(TZ)\big(ch(\triangle(Z))ch(252+\overline{W_{i}}+\overline{W_{j}}\nonumber\\
&+\overline{W_{r}}-24W_{i}-24W_{j}-24W_{r})+ch(\triangle(Z))ch(W_{i})ch(W_{j})+ch(\triangle(Z))ch(W_{i})ch(W_{r})\nonumber\\
&+ch(\triangle(Z))ch(W_{j})ch(W_{r})+ch(\triangle(Z))ch(2\widetilde{T_{\mathbf{C}}Z})ch(-24+W_{i}+W_{j}+W_{r})+ch(\triangle(Z))\nonumber\\
&\cdot ch(2\widetilde{T_{\mathbf{C}}Z}+\widetilde{T_{\mathbf{C}}Z}\otimes\widetilde{T_{\mathbf{C}}Z}+S^{2}(\widetilde{T_{\mathbf{C}}Z})+\wedge^{2}(\widetilde{T_{\mathbf{C}}Z}))+Ach(\triangle(Z))ch(24-2\widetilde{T_{\mathbf{C}}Z}-W_{i}\nonumber\\
&-W_{j}-W_{r})+Ach(3072-128\widetilde{T_{\mathbf{C}}Z}-128W_{i}-128W_{j}-128W_{r}-128\wedge^{2}(\widetilde{T_{\mathbf{C}}Z}))+\frac{1}{2}\nonumber\\
&\cdot(-6+A)A(ch(\triangle(Z))+128)\big)+\cdot\cdot\cdot.\nonumber
\end{align}

$Q_{3}(Z, \tau)$ is a modular form of weight $18$ over $SL_{2}(\mathbf{Z}),$ so
\begin{align}
Q_{3}(Z, \tau)=\lambda_{1}E_{4}(\tau)^{3}E_{6}(\tau)+\lambda_{2}E_{6}(\tau)^{3}.
\end{align}

Comparing the coefficients of $1, q$ and $q^{2},$ we can get the Theorem 3.8.
\end{proof}

\begin{lem}
For $c_{2}(W_{i})+c_{2}(W_{j})+c_{2}(W_{r})=0,$ a straightforward calculation shows that
\begin{align}
&\big\{\widehat{A}(TZ)ch(9409536+67584\widetilde{T_{\mathbf{C}}Z}+67584W_{i}+67584W_{j}+67584W_{r}+67584\wedge^{2}(\widetilde{T_{\mathbf{C}}Z}))\big\}^{(12)}\\
&=\big\{\widehat{A}(TZ)\big(ch(\triangle(Z))ch(-73764-504W_{i}-504W_{j}-504W_{r}-\overline{W_{i}}-\overline{W_{j}}-\overline{W_{r}}-1058\widetilde{T_{\mathbf{C}}Z}\nonumber\\
&-\widetilde{T_{\mathbf{C}}Z}\otimes\widetilde{T_{\mathbf{C}}Z}-S^{2}(\widetilde{T_{\mathbf{C}}Z})-\wedge^{2}(\widetilde{T_{\mathbf{C}}Z}))-ch(\triangle(Z))ch(2\widetilde{T_{\mathbf{C}}Z})ch(-24+W_{i}+W_{j}+W_{r})\nonumber\\
&-ch(\triangle(Z))ch(W_{i})ch(W_{j})-ch(\triangle(Z))ch(W_{i})ch(W_{r})-ch(\triangle(Z))ch(W_{j})ch(W_{r})\big)\big\}^{(12)}.\nonumber
\end{align}
\end{lem}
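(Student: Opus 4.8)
The plan is to derive Lemma 3.9 as the specialization of Theorem 3.8 to the hypothesis $c_{2}(W_{i})+c_{2}(W_{j})+c_{2}(W_{r})=0$, so that no new computation is really required. Adopting the abbreviation $A=\frac{1}{30}(c_{2}(W_{i})+c_{2}(W_{j})+c_{2}(W_{r}))$ from the proof of Theorem 3.8, the hypothesis of the lemma is precisely $A=0$. The first step is to note that under this hypothesis the common exponential prefactor $e^{\frac{1}{720}(c_{2}(W_{i})+c_{2}(W_{j})+c_{2}(W_{r}))}=e^{\frac{1}{24}A}$ equals $1$, and hence may be removed from both sides of the identity in Theorem 3.8.

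The second step is to observe that the left-hand side of Theorem 3.8 then vanishes identically. Indeed each of its three summands carries an explicit factor of $A$: the first is $A\,ch(\triangle(Z))\,ch(-504-2\widetilde{T_{\mathbf{C}}Z}-W_{i}-W_{j}-W_{r})$, the second is $A\,ch(-64512-128\widetilde{T_{\mathbf{C}}Z}-128W_{i}-128W_{j}-128W_{r}-128\wedge^{2}(\widetilde{T_{\mathbf{C}}Z}))$, and the third is $\frac{1}{2}(-6+A)A(ch(\triangle(Z))+128)$. Thus for $A=0$ the left-hand side is $0$, and Theorem 3.8 reduces to the assertion that the degree $12$ component of its right-hand side, with the prefactor now replaced by $1$, equals zero.

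The final step is bookkeeping. In the bracketed right-hand side of Theorem 3.8 the summand $ch(-9409536-67584\widetilde{T_{\mathbf{C}}Z}-67584W_{i}-67584W_{j}-67584W_{r}-67584\wedge^{2}(\widetilde{T_{\mathbf{C}}Z}))$ equals, by additivity of the Chern character, $-ch(9409536+67584\widetilde{T_{\mathbf{C}}Z}+67584W_{i}+67584W_{j}+67584W_{r}+67584\wedge^{2}(\widetilde{T_{\mathbf{C}}Z}))$; transposing the corresponding term in the vanishing identity $\{\widehat{A}(TZ)(\cdots)\}^{(12)}=0$ to the other side of the equality yields exactly the formula claimed in Lemma 3.9. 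I do not anticipate any genuine obstacle here: the whole argument is the substitution $A=0$ into Theorem 3.8 together with one sign-careful rearrangement of the $67584$-coefficient term.
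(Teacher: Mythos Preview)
Your proposal is correct and matches the paper's approach: the paper states Lemma 3.9 without proof immediately after Theorem 3.8, and (as with the analogous Lemmas 3.3 and 3.6) it is meant to be the direct specialization of the preceding theorem under the vanishing hypothesis $A=0$. Your bookkeeping of the prefactor, the vanishing of the left-hand side, and the sign transposition of the $67584$-coefficient term is exactly what is needed.
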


Let $\xi$ be a rank two real oriented Euclidean vector bundle over $Z$ carrying with a Euclidean connection $\nabla^{\xi}.$
Let $c=e(\xi, \nabla^{\xi})=2\pi\sqrt{-1}u$ be the Euler form canonically associated to $\nabla^{\xi}.$

Following \cite{CHZ,HHLZ}, set
\begin{align}
\Theta(T_{\mathbf{C}}Z)=\bigotimes^{\infty}_{m=1}S_{q^{m}}(\widetilde{T_{\mathbf{C}}Z})\otimes\bigotimes_{n=1}^{\infty}\wedge_{q^{n}}(\widetilde{\xi_{\mathbf{C}}})\otimes\bigotimes_{u=1}^{\infty}\wedge_{-q^{u-\frac{1}{2}}}(\widetilde{\xi_{\mathbf{C}}})\otimes\bigotimes_{v=1}^{\infty}\wedge_{q^{v-\frac{1}{2}}}(\widetilde{\xi_{\mathbf{C}}})\in K(Z)[[q]],
\end{align}
where $\xi_{\mathbf{C}}$ is the complexification of $\xi.$
From this $\Theta(T_{\mathbf{C}}Z)$ admits a formal Fourier expansion in $q$ as
\begin{align}
\Theta(T_{\mathbf{C}}Z)=\mathbf{C}+B_{1}q+B_{2}q^{2}+\cdot\cdot\cdot,
\end{align}
where $B_{j}$  are elements in the semi-group formally generated by complex vector bundles over $Z.$
Moreover, they carry canonically induced connections denoted by $\nabla^{B_{j}}.$ 

Set $p_{1}$ denote the first Pontryagin form.
For $1\leq i, j, r\leq2,$ let
\begin{align}
Q(P_{i}, P_{j}, P_{r}, \tau)=&\Big\{ e^{\frac{1}{24}E_{2}(\tau)[p_{1}(TZ)-3c^{2}+\frac{1}{30}(c_{2}(W_{i})+c_{2}(W_{j})+c_{2}(W_{r}))]}\\
&\cdot\widehat{A}(TZ)\cosh\Big(\frac{c}{2}\Big)ch(\Theta(T_{\mathbf{C}}Z))\varphi(\tau)^{(24)}ch(\mathcal{V}_{i})ch(\mathcal{V}_{j})ch(\mathcal{V}_{r})\Big\}^{(12)}.\nonumber
\end{align}

\begin{lem}
$Q(P_{i}, P_{j}, P_{r}, \tau)$ is a modular form of weight $18$ over $SL_{2}(\mathbf{Z}).$
\end{lem}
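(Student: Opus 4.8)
The plan is to follow the template of the proofs of Lemmas 3.1, 3.3 and 3.7, together with the Chen--Han--Zhang description of the bundle $\Theta(T_{\mathbf{C}}Z)$. The first step is to convert the integrand of $Q(P_{i},P_{j},P_{r},\tau)$ into an explicit product of Jacobi theta functions. Let $\{\pm 2\pi\sqrt{-1}x_{k}\mid 1\leq k\leq 6\}$ be the formal Chern roots of $T_{\mathbf{C}}Z$ and recall $c=2\pi\sqrt{-1}u$. From $\widehat{A}(TZ)ch\big(\bigotimes_{m}S_{q^{m}}(\widetilde{T_{\mathbf{C}}Z})\big)=\prod_{k=1}^{6}\frac{x_{k}\theta'(0,\tau)}{\theta(x_{k},\tau)}$ and $\cosh\big(\frac{c}{2}\big)ch\big(\bigotimes_{n}\wedge_{q^{n}}(\widetilde{\xi_{\mathbf{C}}})\otimes\bigotimes_{n}\wedge_{-q^{n-1/2}}(\widetilde{\xi_{\mathbf{C}}})\otimes\bigotimes_{n}\wedge_{q^{n-1/2}}(\widetilde{\xi_{\mathbf{C}}})\big)=\frac{\theta_{1}(u,\tau)\theta_{2}(u,\tau)\theta_{3}(u,\tau)}{\theta_{1}(0,\tau)\theta_{2}(0,\tau)\theta_{3}(0,\tau)}$ (both as in \cite{CHZ,HHLZ}), together with (2.27) applied to each of $\mathcal{V}_{i},\mathcal{V}_{j},\mathcal{V}_{r}$, one obtains
\begin{align*}
Q(P_{i},P_{j},P_{r},\tau)=\Big\{ &e^{\frac{1}{24}E_{2}(\tau)[p_{1}(TZ)-3c^{2}+\frac{1}{30}(c_{2}(W_{i})+c_{2}(W_{j})+c_{2}(W_{r}))]}\prod_{k=1}^{6}\frac{x_{k}\theta'(0,\tau)}{\theta(x_{k},\tau)}\\
&\times\frac{\theta_{1}(u,\tau)\theta_{2}(u,\tau)\theta_{3}(u,\tau)}{\theta_{1}(0,\tau)\theta_{2}(0,\tau)\theta_{3}(0,\tau)}\cdot\frac{1}{8}\prod_{a\in\{i,j,r\}}\Big(\sum_{m=1}^{3}\prod_{l=1}^{8}\theta_{m}(y_{l}^{a},\tau)\Big)\Big\}^{(12)}.
\end{align*}
Using $p_{1}(TZ)=\sum_{k}(2\pi\sqrt{-1}x_{k})^{2}$, $c^{2}=(2\pi\sqrt{-1}u)^{2}$ and (2.28), the $4$-form in the exponent equals $4\pi^{2}\big(-\sum_{k}x_{k}^{2}+3u^{2}+\sum_{a}\sum_{l}(y_{l}^{a})^{2}\big)$.

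For invariance under $T$: by $E_{2}(\tau+1)=E_{2}(\tau)$ and the formulas (2.21)--(2.24), under $\tau\mapsto\tau+1$ the functions $\theta(\cdot,\tau),\theta'(0,\tau)$ and $\theta_{1}(\cdot,\tau)$ each acquire the factor $e^{\pi\sqrt{-1}/4}$ while $\theta_{2}\leftrightarrow\theta_{3}$. Hence $\prod_{k}\frac{x_{k}\theta'(0,\tau)}{\theta(x_{k},\tau)}$ is unchanged; the ratio $\frac{\theta_{1}\theta_{2}\theta_{3}(u,\tau)}{\theta_{1}\theta_{2}\theta_{3}(0,\tau)}$ is unchanged because the $e^{\pi\sqrt{-1}/4}$ cancels between numerator and denominator and the swap $\theta_{2}\leftrightarrow\theta_{3}$ is harmless; and each $\sum_{m}\prod_{l=1}^{8}\theta_{m}(y_{l}^{a},\tau)$ is unchanged because the $8$ factors of $\theta_{1}$ contribute $e^{8\pi\sqrt{-1}/4}=1$ while the $\theta_{2}$-product and $\theta_{3}$-product are interchanged. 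Therefore $Q(P_{i},P_{j},P_{r},\tau+1)=Q(P_{i},P_{j},P_{r},\tau)$.

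For invariance under $S$ and the weight: apply (2.18)--(2.20) and (2.26). Each theta-factor contributes a power of $(\tau/\sqrt{-1})^{1/2}$, an exponential $e^{\pi\sqrt{-1}\tau v^{2}}$ and the rescaled argument $\tau v$; each $\theta'(0,\tau)$ contributes an extra $\tau$; and $E_{2}(-\tfrac1\tau)=\tau^{2}E_{2}(\tau)-\frac{6\sqrt{-1}\tau}{\pi}$ writes the prefactor as $e^{\frac{1}{24}\tau^{2}E_{2}(\tau)(\cdots)}\cdot e^{-\frac{\sqrt{-1}\tau}{4\pi}(\cdots)}$. The first piece is the prefactor at $\tau$ with all formal roots rescaled by $\tau$ (the exponent being homogeneous of degree $2$), and the second piece cancels identically the product of all the exponentials $e^{\pi\sqrt{-1}\tau v^{2}}$ coming from the theta-factors — this is exactly what the coefficient $-3$ (matching the three theta-functions $\theta_{1}(u,\tau)\theta_{2}(u,\tau)\theta_{3}(u,\tau)$) and the coefficient $\frac{1}{30}$ (matching (2.28)) are chosen for. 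Once the exponentials are gone, the half-integer powers of $\tau/\sqrt{-1}$ from the $\widehat{A}$- and $\xi$-blocks cancel between numerators and denominators, the six extra $\tau$'s from the $\theta'(0,\tau)$'s are absorbed into the rescaling of the numerator roots, and the integrand at $-\tfrac1\tau$ becomes $(\tau/\sqrt{-1})^{12}$ times the integrand at $\tau$ with every formal root rescaled by $\tau$ — the surviving $(\tau/\sqrt{-1})^{12}=\tau^{12}$ being the contribution $(\tau/\sqrt{-1})^{4}$ of the $8$ theta-functions in each of the three $E_{8}$-blocks. Extracting the degree-$12$ component, which is homogeneous of degree $6$ in the formal roots, produces a further $\tau^{6}$, whence $Q(P_{i},P_{j},P_{r},-\tfrac1\tau)=\tau^{18}Q(P_{i},P_{j},P_{r},\tau)$. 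Since $Q(P_{i},P_{j},P_{r},\tau)$ is visibly holomorphic on $\mathbf{H}$, it is a modular form of weight $18$ over $SL_{2}(\mathbf{Z})$.

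The main obstacle is the bookkeeping in this last step: tracking the half-integer powers of $\tau/\sqrt{-1}$ and the $\tau$-rescaling of all the theta arguments against the degree-$12$ truncation, and — above all — verifying that the quadratic exponentials produced by every theta-factor cancel precisely the anomalous term $-\frac{6\sqrt{-1}\tau}{\pi}$ in the $S$-transformation of $E_{2}$. The identities $p_{1}(TZ)=\sum_{k}(2\pi\sqrt{-1}x_{k})^{2}$, $c^{2}=(2\pi\sqrt{-1}u)^{2}$ and $\sum_{l}(2\pi\sqrt{-1}y_{l}^{a})^{2}=-\frac{1}{30}c_{2}(W_{a})$ are exactly what make this cancellation work; everything else runs parallel to the proofs of Lemmas 3.1, 3.3 and 3.7 and to the Chen--Han--Zhang computation for $\Theta(T_{\mathbf{C}}Z)$.
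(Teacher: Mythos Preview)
Your proposal is correct and follows essentially the same approach as the paper: you write $Q(P_{i},P_{j},P_{r},\tau)$ as a product of theta functions (matching the paper's expression (3.41)) and then verify invariance under $T$ and the weight-$18$ behavior under $S$ via the transformation laws (2.14)--(2.22) and (2.24)--(2.26). The paper's own proof stops after displaying the theta-function expression and simply asserts modularity, whereas you spell out the $T$- and $S$-checks and the exponential cancellation explicitly; this is additional detail, not a different method.
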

\begin{proof}
Since
\begin{align}
&\widehat{A}(TZ)ch\Big(\bigotimes^{\infty}_{m=1}S_{q^{m}}(\widetilde{T_{\mathbf{C}}Z})\Big)=\prod_{k=1}^{6}\frac{x_{k}\theta'(0, \tau)}{\theta(x_{k}, \tau)},\\
&\cosh\Big(\frac{c}{2}\Big)ch\Big(\bigotimes_{n=1}^{\infty}\wedge_{q^{n}}(\widetilde{\xi_{\mathbf{C}}})\Big)=\frac{\theta_{1}(u, \tau)}{\theta_{1}(0, \tau)},
\end{align}
\begin{align}
&ch\Big(\bigotimes_{u=1}^{\infty}\wedge_{-q^{u-\frac{1}{2}}}(\widetilde{\xi_{\mathbf{C}}})\Big)=\frac{\theta_{2}(u, \tau)}{\theta_{2}(0, \tau)},\\
&ch\Big(\bigotimes_{v=1}^{\infty}\wedge_{q^{v-\frac{1}{2}}}(\widetilde{\xi_{\mathbf{C}}})\Big)=\frac{\theta_{3}(u, \tau)}{\theta_{3}(0, \tau)},
\end{align}
we can get
\begin{align}
&Q(P_{i}, P_{j}, P_{r}, \tau)=\\
&\bigg\{\frac{1}{8}e^{\frac{1}{24}E_{2}(\tau)[p_{1}(TZ)-3c^{2}+\frac{1}{30}(c_{2}(W_{i})+c_{2}(W_{j})+c_{2}(W_{r}))]}\prod_{k=1}^{6}\frac{x_{k}\theta'(0, \tau)}{\theta(x_{k}, \tau)}\frac{\theta_{1}(u, \tau)}{\theta_{1}(0, \tau)}\frac{\theta_{2}(u, \tau)}{\theta_{2}(0, \tau)}\frac{\theta_{3}(u, \tau)}{\theta_{3}(0, \tau)}\nonumber\\
&\cdot\Big(\prod_{l=1}^{8}\theta_{1}(y_{l}^{i}, \tau)+\prod_{l=1}^{8}\theta_{2}(y_{l}^{i}, \tau)+\prod_{l=1}^{8}\theta_{3}(y_{l}^{i}, \tau)\Big)\Big(\prod_{l=1}^{8}\theta_{1}(y_{l}^{j}, \tau)+\prod_{l=1}^{8}\theta_{2}(y_{l}^{j}, \tau)+\prod_{l=1}^{8}\theta_{3}(y_{l}^{j}, \tau)\Big)\nonumber\\
&\cdot\Big(\prod_{l=1}^{8}\theta_{1}(y_{l}^{r}, \tau)+\prod_{l=1}^{8}\theta_{2}(y_{l}^{r}, \tau)+\prod_{l=1}^{8}\theta_{3}(y_{l}^{r}, \tau)\Big)\bigg\}^{(12)}.\nonumber
\end{align}
In this way, $Q(P_{i}, P_{j}, P_{r}, \tau)$ is a modular form over $SL_{2}(\mathbf{Z})$ with the weight $18.$
\end{proof}

\begin{thm}
For $12$ dimensional oriented smooth closed manifold, we obtain
\begin{align}
&\Big\{e^{\frac{1}{24}(p_{1}(TZ)-3c^{2}+\frac{1}{30}(c_{2}(W_{i})+c_{2}(W_{j})+c_{2}(W_{r})))}\frac{1}{2}(-6+p_{1}(TZ)-3c^{2}+\frac{1}{30}(c_{2}(W_{i})+c_{2}(W_{j})\\
&+c_{2}(W_{r})))(p_{1}(TZ)-3c^{2}+\frac{1}{30}(c_{2}(W_{i})+c_{2}(W_{j})+c_{2}(W_{r})))\widehat{A}(TZ)\cosh\Big(\frac{c}{2}\Big)\nonumber\\
&-e^{\frac{1}{24}(p_{1}(TZ)-3c^{2}+\frac{1}{30}(c_{2}(W_{i})+c_{2}(W_{j})+c_{2}(W_{r})))}(p_{1}(TZ)-3c^{2}+\frac{1}{30}(c_{2}(W_{i})+c_{2}(W_{j})\nonumber\\
&+c_{2}(W_{r})))\widehat{A}(TZ)\cosh\Big(\frac{c}{2}\Big)ch(504+B_{1}+W_{i}+W_{j}+W_{r})\Big\}^{(12)}\nonumber\\
&=\Big\{e^{\frac{1}{24}(p_{1}(TZ)-3c^{2}+\frac{1}{30}(c_{2}(W_{i})+c_{2}(W_{j})+c_{2}(W_{r})))}\widehat{A}(TZ)\cosh\Big(\frac{c}{2}\Big)\big(ch(-73764-504B_{1}\nonumber\\
&-B_{2}-504W_{i}-504W_{j}-504W_{r}-\overline{W_{i}}-\overline{W_{j}}-\overline{W_{r}})-ch(B_{1})ch(W_{i}+W_{j}+W_{r})\nonumber\\
&-ch(W_{i})ch(W_{j})-ch(W_{i})ch(W_{r})-ch(W_{j})ch(W_{r})\big)\Big\}^{(12)},\nonumber
\end{align}
where
\begin{align}
B_{1}&=\widetilde{T_{\mathbf{C}}Z}+\widetilde{\xi_{\mathbf{C}}}-\widetilde{\xi_{\mathbf{C}}}\otimes\widetilde{\xi_{\mathbf{C}}}+2\wedge^{2}(\widetilde{\xi_{\mathbf{C}}}),
\end{align}
\begin{align}
B_{2}&=\widetilde{T_{\mathbf{C}}Z}+S^{2}(\widetilde{T_{\mathbf{C}}Z})+\widetilde{T_{\mathbf{C}}Z}\otimes\widetilde{\xi_{\mathbf{C}}}-\widetilde{T_{\mathbf{C}}Z}\otimes\widetilde{\xi_{\mathbf{C}}}\otimes\widetilde{\xi_{\mathbf{C}}}+2\widetilde{T_{\mathbf{C}}Z}\otimes\wedge^{2}(\widetilde{\xi_{\mathbf{C}}})\\
&+\widetilde{\xi_{\mathbf{C}}}+\wedge^{2}(\widetilde{\xi_{\mathbf{C}}})-\widetilde{\xi_{\mathbf{C}}}\otimes\widetilde{\xi_{\mathbf{C}}}\otimes\widetilde{\xi_{\mathbf{C}}}+2\widetilde{\xi_{\mathbf{C}}}\otimes\wedge^{2}(\widetilde{\xi_{\mathbf{C}}})-2\widetilde{\xi_{\mathbf{C}}}\otimes\wedge^{3}(\widetilde{\xi_{\mathbf{C}}})\nonumber\\
&+\wedge^{2}(\widetilde{\xi_{\mathbf{C}}})\otimes\wedge^{2}(\widetilde{\xi_{\mathbf{C}}})+2\wedge^{4}(\widetilde{\xi_{\mathbf{C}}}).\nonumber
\end{align}
\end{thm}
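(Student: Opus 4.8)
The plan is to mimic the proof of Theorem 3.8, replacing the $E_8\times E_8\times E_8$ twist of the Witten-type bundle $\Theta_1\oplus 2^6\Theta_2\oplus 2^6\Theta_3$ by the single twisted bundle $\Theta(T_{\mathbf C}Z)$ built from the rank-two bundle $\xi$. By Lemma 3.11, $Q(P_i,P_j,P_r,\tau)$ is a modular form of weight $18$ over $SL_2(\mathbf Z)$, so it lies in the two-dimensional space spanned by $E_4(\tau)^3E_6(\tau)$ and $E_6(\tau)^3$; write $Q(P_i,P_j,P_r,\tau)=\lambda_1E_4(\tau)^3E_6(\tau)+\lambda_2E_6(\tau)^3$. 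As in Theorem 3.8, the two unknowns $\lambda_1,\lambda_2$ are pinned down by the constant term and the $q$-coefficient, and then the $q^2$-coefficient of $Q(P_i,P_j,P_r,\tau)$ — which is a polynomial in $\lambda_1,\lambda_2$ with known numerical coefficients coming from the $q$-expansions $E_4(\tau)=1+240q+2160q^2+\cdots$ and $E_6(\tau)=1-504q-16632q^2+\cdots$ — yields the asserted degree-$12$ identity.

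First I would extract the $q$-expansion of the integrand $e^{\frac1{24}E_2(\tau)[p_1(TZ)-3c^2+A]}\widehat A(TZ)\cosh(\tfrac c2)ch(\Theta(T_{\mathbf C}Z))\varphi(\tau)^{(24)}ch(\mathcal V_i)ch(\mathcal V_j)ch(\mathcal V_r)$ up to order $q^2$, where I abbreviate $A=\tfrac1{30}(c_2(W_i)+c_2(W_j)+c_2(W_r))$. This has three sources of $q$-dependence: the exponential prefactor, which expands using $E_2(\tau)=1-24q-72q^2+\cdots$ and contributes the factor $e^{\frac1{24}(p_1(TZ)-3c^2+A)}$ together with corrections linear and quadratic in $(p_1(TZ)-3c^2+A)$; the bundle $\Theta(T_{\mathbf C}Z)=\mathbf C+B_1q+B_2q^2+\cdots$, whose low coefficients $B_1$ and $B_2$ are computed from the defining tensor product and recorded in the statement; and the three $E_8$-factors $\varphi(\tau)^{(24)}ch(\mathcal V_i)ch(\mathcal V_j)ch(\mathcal V_r)$. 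For the last ingredient I would use, following \cite{HLZ,HHLZ}, the known expansion $\varphi(\tau)^{(8)}ch(\mathcal V_i)=1+q(\,\overline{W_i}-(248-?)\,\cdots)+\cdots$; concretely one needs that $\varphi(\tau)^{(8)}ch(\mathcal V_i)$ begins $1+ q\,ch(W_i)\cdot(\text{something})$ and, at order $q^2$, involves $\overline{W_i}$ and $W_i$ with the coefficients $-504$ and $252$ appearing in the displayed $q^2$-term of the Theorem 3.8 proof, so that the product over $i,j,r$ produces precisely the cross terms $ch(W_i)ch(W_j)$ etc.\ and the diagonal terms $-504W_i$, $\overline{W_i}$, together with the pure number $-73764$ (which is $3\cdot(-504)$ combined with the tensor-square contributions). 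Multiplying the three expansions and collecting the coefficients of $1$, $q$, $q^2$ gives the analogue of the long display in the proof of Theorem 3.8, but now with $2\widetilde{T_{\mathbf C}Z}$ replaced by $B_1$, with $\widetilde{T_{\mathbf C}Z}\otimes\widetilde{T_{\mathbf C}Z}+S^2(\widetilde{T_{\mathbf C}Z})+\wedge^2(\widetilde{T_{\mathbf C}Z})$ replaced by $B_2$, and with the overall $128\,\widehat A$-type terms absent because $\cosh(\tfrac c2)ch(\Theta)$ has constant term $1$ rather than $ch(\triangle(Z))+128$.

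Having the first three $q$-coefficients of the integrand in hand, I would integrate over $Z$ (picking the degree-$12$ component), equate with $\lambda_1E_4^3E_6+\lambda_2E_6^3=( \lambda_1+\lambda_2)+(720\lambda_1-1512\lambda_2-\cdots)q+\cdots$, solve the $2\times2$ linear system from the $1$- and $q$-coefficients for $\lambda_1,\lambda_2$ in terms of $\int_Z e^{\frac1{24}(p_1-3c^2+A)}\widehat A\cosh(\tfrac c2)$ and $\int_Z$ of the $q$-coefficient, substitute back into the $q^2$-coefficient relation, and rearrange to isolate the $ch(\cdots 67584\cdots)$-type term on one side; the resulting identity, after multiplying through and rewriting $p_1(TZ)-3c^2+A$ where it occurs, is exactly the statement of Theorem 3.12. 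The main obstacle I anticipate is purely bookkeeping rather than conceptual: reliably computing $B_2$ — the order-$q^2$ term of the four-fold tensor product $\bigotimes S_{q^m}(\widetilde{T_{\mathbf C}Z})\otimes\bigotimes\wedge_{q^n}(\widetilde{\xi_{\mathbf C}})\otimes\bigotimes\wedge_{-q^{n-1/2}}(\widetilde{\xi_{\mathbf C}})\otimes\bigotimes\wedge_{q^{n-1/2}}(\widetilde{\xi_{\mathbf C}})$, keeping track of the half-integer powers that combine across the second and third factors — and correctly threading the order-$q^2$ data of $\varphi(\tau)^{(8)}ch(\mathcal V_i)$ (where the representation theory of $E_8$ enters through $\overline{W_i}$ and the constants $248$, $3875$, $30380$) through the triple product without sign or coefficient errors. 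Once those two expansions are nailed down, the modularity input from Lemma 3.11 does all the remaining work automatically.
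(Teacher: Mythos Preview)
Your proposal is correct and follows essentially the same route as the paper: expand the integrand of $Q(P_i,P_j,P_r,\tau)$ through order $q^2$ (computing $B_1,B_2$ from the four tensor factors of $\Theta(T_{\mathbf C}Z)$ and the triple $E_8$-product from $\varphi(\tau)^{24}ch(\mathcal V_i)ch(\mathcal V_j)ch(\mathcal V_r)$), invoke the modularity lemma to write $Q=\lambda_1E_4^3E_6+\lambda_2E_6^3$, and match the first three $q$-coefficients. One small slip: there is no ``$67584$''-type term to isolate here---that number was an artifact of the $ch(\triangle(Z))+128$ factor in Theorem~3.8, which is absent in the present setting (as you yourself note a few lines earlier).
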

\begin{proof}
It is easy to obtain
\begin{align}
&e^{\frac{1}{24}E_{2}(\tau)[p_{1}(TZ)-3c^{2}+\frac{1}{30}(c_{2}(W_{i})+c_{2}(W_{j})+c_{2}(W_{r}))]}\widehat{A}(TZ)\cosh\Big(\frac{c}{2}\Big)ch(\Theta(T_{\mathbf{C}}Z))\varphi(\tau)^{(24)}\\
&\cdot ch(\mathcal{V}_{i})ch(\mathcal{V}_{j})ch(\mathcal{V}_{r})=e^{\frac{1}{24}(p_{1}(TZ)-3c^{2}+\frac{1}{30}(c_{2}(W_{i})+c_{2}(W_{j})+c_{2}(W_{r})))}\widehat{A}(TZ)\cosh\Big(\frac{c}{2}\Big)\nonumber\\
&+q\big(-e^{\frac{1}{24}(p_{1}(TZ)-3c^{2}+\frac{1}{30}(c_{2}(W_{i})+c_{2}(W_{j})+c_{2}(W_{r})))}\widehat{A}(TZ)\cosh\Big(\frac{c}{2}\Big)(p_{1}(TZ)-3c^{2}\nonumber\\
&+\frac{1}{30}(c_{2}(W_{i})+c_{2}(W_{j})+c_{2}(W_{r})))+e^{\frac{1}{24}(p_{1}(TZ)-3c^{2}+\frac{1}{30}(c_{2}(W_{i})+c_{2}(W_{j})+c_{2}(W_{r})))}\widehat{A}(TZ)\nonumber\\
&\cdot\cosh\Big(\frac{c}{2}\Big)ch(-24+B_{1}+W_{i}+W_{j}+W_{r})\big)+q^{2}\big(e^{\frac{1}{24}(p_{1}(TZ)-3c^{2}+\frac{1}{30}(c_{2}(W_{i})+c_{2}(W_{j})+c_{2}(W_{r})))}\nonumber\\
&\cdot\frac{1}{2}(-6+p_{1}(TZ)-3c^{2}+\frac{1}{30}(c_{2}(W_{i})+c_{2}(W_{j})+c_{2}(W_{r})))(p_{1}(TZ)-3c^{2}+\frac{1}{30}(c_{2}(W_{i})\nonumber\\
&+c_{2}(W_{j})+c_{2}(W_{r})))\widehat{A}(TZ)\cosh\Big(\frac{c}{2}\Big)-e^{\frac{1}{24}(p_{1}(TZ)-3c^{2}+\frac{1}{30}(c_{2}(W_{i})+c_{2}(W_{j})+c_{2}(W_{r})))}(p_{1}(TZ)\nonumber\\
&-3c^{2}+\frac{1}{30}(c_{2}(W_{i})+c_{2}(W_{j})+c_{2}(W_{r})))\widehat{A}(TZ)\cosh\Big(\frac{c}{2}\Big)ch(-24+B_{1}+W_{i}+W_{j}+W_{r})\nonumber\\
&+e^{\frac{1}{24}(p_{1}(TZ)-3c^{2}+\frac{1}{30}(c_{2}(W_{i})+c_{2}(W_{j})+c_{2}(W_{r})))}\widehat{A}(TZ)\cosh\Big(\frac{c}{2}\Big)ch(252+B_{2}+\overline{W_{i}}+\overline{W_{j}}+\overline{W_{r}}\nonumber\\
&-24B_{1}+W_{i}B_{1}+W_{j}B_{1}+W_{r}B_{1}-24W_{i}-24W_{j}-24W_{r}+W_{i}W_{j}+W_{i}W_{r}+W_{j}W_{r})\big)\nonumber\\
&+\cdot\cdot\cdot.\nonumber
\end{align}

Consequently,
\begin{align}
Q(P_{i}, P_{j}, P_{r}, \tau)=\lambda_{1}E_{4}(\tau)^{3}E_{6}(\tau)+\lambda_{2}E_{6}(\tau)^{3}.
\end{align}

Similar to the proof of Theorem 3.8, we have (3.44).

To find $B_{1}$ and $B_{2},$ we have
\begin{align}
&\bigotimes^{\infty}_{m=1}S_{q^{m}}(\widetilde{T_{\mathbf{C}}Z})=1+q\widetilde{T_{\mathbf{C}}Z}+q^{2}(\widetilde{T_{\mathbf{C}}Z}+S^{2}(\widetilde{T_{\mathbf{C}}Z}))+\cdot\cdot\cdot;\\
&\bigotimes_{n=1}^{\infty}\wedge_{q^{n}}(\widetilde{\xi_{\mathbf{C}}})=1+q\widetilde{\xi_{\mathbf{C}}}+q^{2}(\widetilde{\xi_{\mathbf{C}}}+\wedge^{2}(\widetilde{\xi_{\mathbf{C}}}))+\cdot\cdot\cdot;
\end{align}
\begin{align}
&\bigotimes_{u=1}^{\infty}\wedge_{-q^{u-\frac{1}{2}}}(\widetilde{\xi_{\mathbf{C}}})=1-q^{\frac{1}{2}}\widetilde{\xi_{\mathbf{C}}}+q\wedge^{2}(\widetilde{\xi_{\mathbf{C}}})+q^{\frac{3}{2}}(-\widetilde{\xi_{\mathbf{C}}}-\wedge^{3}(\widetilde{\xi_{\mathbf{C}}}))\\
&+q^{2}(\widetilde{\xi_{\mathbf{C}}}\otimes\widetilde{\xi_{\mathbf{C}}}+\wedge^{4}(\widetilde{\xi_{\mathbf{C}}}))+\cdot\cdot\cdot;\nonumber\\
&\bigotimes_{v=1}^{\infty}\wedge_{q^{v-\frac{1}{2}}}(\widetilde{\xi_{\mathbf{C}}})=1+q^{\frac{1}{2}}\widetilde{\xi_{\mathbf{C}}}+q\wedge^{2}(\widetilde{\xi_{\mathbf{C}}})+q^{\frac{3}{2}}(\widetilde{\xi_{\mathbf{C}}}+\wedge^{3}(\widetilde{\xi_{\mathbf{C}}}))\\
&+q^{2}(\widetilde{\xi_{\mathbf{C}}}\otimes\widetilde{\xi_{\mathbf{C}}}+\wedge^{4}(\widetilde{\xi_{\mathbf{C}}}))+\cdot\cdot\cdot,\nonumber
\end{align}
it follows that
\begin{align}
&\Theta(T_{\mathbf{C}}Z)=1+q(\widetilde{T_{\mathbf{C}}Z}+\widetilde{\xi_{\mathbf{C}}}-\widetilde{\xi_{\mathbf{C}}}\otimes\widetilde{\xi_{\mathbf{C}}}+2\wedge^{2}(\widetilde{\xi_{\mathbf{C}}}))+q^{2}(\widetilde{T_{\mathbf{C}}Z}+S^{2}(\widetilde{T_{\mathbf{C}}Z})+\widetilde{T_{\mathbf{C}}Z}\otimes\widetilde{\xi_{\mathbf{C}}}\\
&-\widetilde{T_{\mathbf{C}}Z}\otimes\widetilde{\xi_{\mathbf{C}}}\otimes\widetilde{\xi_{\mathbf{C}}}+2\widetilde{T_{\mathbf{C}}Z}\otimes\wedge^{2}(\widetilde{\xi_{\mathbf{C}}})+\widetilde{\xi_{\mathbf{C}}}+\wedge^{2}(\widetilde{\xi_{\mathbf{C}}})-\widetilde{\xi_{\mathbf{C}}}\otimes\widetilde{\xi_{\mathbf{C}}}\otimes\widetilde{\xi_{\mathbf{C}}}+2\widetilde{\xi_{\mathbf{C}}}\otimes\wedge^{2}(\widetilde{\xi_{\mathbf{C}}})\nonumber\\
&-2\widetilde{\xi_{\mathbf{C}}}\otimes\wedge^{3}(\widetilde{\xi_{\mathbf{C}}})+\wedge^{2}(\widetilde{\xi_{\mathbf{C}}})\otimes\wedge^{2}(\widetilde{\xi_{\mathbf{C}}})+2\wedge^{4}(\widetilde{\xi_{\mathbf{C}}}))+\cdot\cdot\cdot.\nonumber
\end{align}
Thus
\begin{align}
B_{1}&=\widetilde{T_{\mathbf{C}}Z}+\widetilde{\xi_{\mathbf{C}}}-\widetilde{\xi_{\mathbf{C}}}\otimes\widetilde{\xi_{\mathbf{C}}}+2\wedge^{2}(\widetilde{\xi_{\mathbf{C}}}),\\
B_{2}&=\widetilde{T_{\mathbf{C}}Z}+S^{2}(\widetilde{T_{\mathbf{C}}Z})+\widetilde{T_{\mathbf{C}}Z}\otimes\widetilde{\xi_{\mathbf{C}}}-\widetilde{T_{\mathbf{C}}Z}\otimes\widetilde{\xi_{\mathbf{C}}}\otimes\widetilde{\xi_{\mathbf{C}}}+2\widetilde{T_{\mathbf{C}}Z}\otimes\wedge^{2}(\widetilde{\xi_{\mathbf{C}}})\\
&+\widetilde{\xi_{\mathbf{C}}}+\wedge^{2}(\widetilde{\xi_{\mathbf{C}}})-\widetilde{\xi_{\mathbf{C}}}\otimes\widetilde{\xi_{\mathbf{C}}}\otimes\widetilde{\xi_{\mathbf{C}}}+2\widetilde{\xi_{\mathbf{C}}}\otimes\wedge^{2}(\widetilde{\xi_{\mathbf{C}}})-2\widetilde{\xi_{\mathbf{C}}}\otimes\wedge^{3}(\widetilde{\xi_{\mathbf{C}}})\nonumber\\
&+\wedge^{2}(\widetilde{\xi_{\mathbf{C}}})\otimes\wedge^{2}(\widetilde{\xi_{\mathbf{C}}})+2\wedge^{4}(\widetilde{\xi_{\mathbf{C}}}).\nonumber
\end{align}
\end{proof}

\begin{lem}
Consider $p_{1}(TZ)-3c^{2}+\frac{1}{30}(c_{2}(W_{i})+c_{2}(W_{j})+c_{2}(W_{r}))=0,$ then
\begin{align}
&73764\Big\{\widehat{A}(TZ)\cosh\Big(\frac{c}{2}\Big)\Big\}^{(12)}=\Big\{\widehat{A}(TZ)\cosh\Big(\frac{c}{2}\Big)\big(ch(-504B_{1}-B_{2}-504W_{i}-504W_{j}\\
&-504W_{r}-\overline{W_{i}}-\overline{W_{j}}-\overline{W_{r}})-ch(B_{1})ch(W_{i}+W_{j}+W_{r})-ch(W_{i})ch(W_{j})-ch(W_{i})ch(W_{r})\nonumber\\
&-ch(W_{j})ch(W_{r})\big)\Big\}^{(12)},\nonumber
\end{align}
where
\begin{align}
B_{1}&=\widetilde{T_{\mathbf{C}}Z}+\widetilde{\xi_{\mathbf{C}}}-\widetilde{\xi_{\mathbf{C}}}\otimes\widetilde{\xi_{\mathbf{C}}}+2\wedge^{2}(\widetilde{\xi_{\mathbf{C}}}),\\
B_{2}&=\widetilde{T_{\mathbf{C}}Z}+S^{2}(\widetilde{T_{\mathbf{C}}Z})+\widetilde{T_{\mathbf{C}}Z}\otimes\widetilde{\xi_{\mathbf{C}}}-\widetilde{T_{\mathbf{C}}Z}\otimes\widetilde{\xi_{\mathbf{C}}}\otimes\widetilde{\xi_{\mathbf{C}}}+2\widetilde{T_{\mathbf{C}}Z}\otimes\wedge^{2}(\widetilde{\xi_{\mathbf{C}}})\\
&+\widetilde{\xi_{\mathbf{C}}}+\wedge^{2}(\widetilde{\xi_{\mathbf{C}}})-\widetilde{\xi_{\mathbf{C}}}\otimes\widetilde{\xi_{\mathbf{C}}}\otimes\widetilde{\xi_{\mathbf{C}}}+2\widetilde{\xi_{\mathbf{C}}}\otimes\wedge^{2}(\widetilde{\xi_{\mathbf{C}}})-2\widetilde{\xi_{\mathbf{C}}}\otimes\wedge^{3}(\widetilde{\xi_{\mathbf{C}}})\nonumber\\
&+\wedge^{2}(\widetilde{\xi_{\mathbf{C}}})\otimes\wedge^{2}(\widetilde{\xi_{\mathbf{C}}})+2\wedge^{4}(\widetilde{\xi_{\mathbf{C}}}).\nonumber
\end{align}
\end{lem}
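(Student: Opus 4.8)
The plan is to obtain Lemma 3.12 as an immediate specialization of Theorem 3.11 to the locus where the relevant characteristic form vanishes. Abbreviate $\alpha := p_{1}(TZ)-3c^{2}+\frac{1}{30}(c_{2}(W_{i})+c_{2}(W_{j})+c_{2}(W_{r}))$, so that Lemma 3.12 treats the case $\alpha=0$. The first step is to substitute $\alpha=0$ into the identity of Theorem 3.11. On its left-hand side every summand carries an explicit factor of $\alpha$: the first through the product $\frac{1}{2}(-6+\alpha)\alpha$, the second through the lone $\alpha$ multiplying $\widehat{A}(TZ)\cosh(\frac{c}{2})ch(504+B_{1}+W_{i}+W_{j}+W_{r})$. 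Hence the whole left-hand side collapses to $0$ when $\alpha=0$, while the overall prefactor $e^{\frac{1}{24}\alpha}$ appearing on both sides of Theorem 3.11 reduces to $1$.

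The second step is to separate the rank-$(-73764)$ summand from the first Chern character on the right-hand side. Since $ch$ is additive and a trivial bundle of rank $n$ contributes $n$ in degree $0$, we have
\begin{align}
&ch(-73764-504B_{1}-B_{2}-504W_{i}-504W_{j}-504W_{r}-\overline{W_{i}}-\overline{W_{j}}-\overline{W_{r}})\nonumber\\
&\qquad=-73764+ch(-504B_{1}-B_{2}-504W_{i}-504W_{j}-504W_{r}-\overline{W_{i}}-\overline{W_{j}}-\overline{W_{r}}).\nonumber
\end{align}
Inserting this into the (now vanishing) right-hand side of Theorem 3.11 and transposing the resulting term $-73764\{\widehat{A}(TZ)\cosh(\frac{c}{2})\}^{(12)}$ to the opposite side yields precisely the asserted formula; the expressions for $B_{1}$ and $B_{2}$ are inherited verbatim from Theorem 3.11.

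I do not anticipate a genuine obstacle: granting Theorem 3.11, the argument is a purely formal rearrangement of characteristic forms. The only points meriting a glance are checking, from the statement of Theorem 3.11, that every term on its left-hand side is proportional to $\alpha$ — so that the substitution $\alpha=0$ annihilates it without leaving a residue — and recording that $e^{\frac{1}{24}\alpha}=1$ there. The remainder is bookkeeping.
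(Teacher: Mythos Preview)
Your proposal is correct and matches the paper's treatment: Lemma 3.12 is stated there as an immediate corollary of Theorem 3.11 with no separate proof, and your specialization $\alpha=0$ together with the splitting off of the constant $-73764$ from the Chern character is exactly the intended derivation.
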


According to the construction method in \cite{L}, we have
\begin{align}
\Phi(T_{\mathbf{C}}Z)&=\bigotimes^{\infty}_{m=1}S_{q^{m}}(\widetilde{T_{\mathbf{C}}Z})\otimes\bigotimes_{n=1}^{\infty}\wedge_{q^{n}}(\widetilde{T_{\mathbf{C}}Z})\otimes\bigotimes_{u=1}^{\infty}\wedge_{-q^{u-\frac{1}{2}}}(\widetilde{T_{\mathbf{C}}Z})\\
&\otimes\bigotimes_{v=1}^{\infty}\wedge_{q^{v-\frac{1}{2}}}(\widetilde{T_{\mathbf{C}}Z})\in K(Z)[[q]].\nonumber
\end{align}
Perform the formal Fourier expansion in $q$ as
\begin{align}
\Phi(T_{\mathbf{C}}Z)=\mathbf{C}+D_{1}q+D_{2}q^{2}+\cdot\cdot\cdot,
\end{align}
where $D_{j}$  are elements in the semi-group formally generated by complex vector bundles over $Z.$

Choose
\begin{align}
Q_{L}(P_{i}, P_{j}, P_{r}, \tau)&=\Big\{e^{\frac{1}{24}E_{2}(\tau)[-2p_{1}(TZ)+\frac{1}{30}(c_{2}(W_{i})+c_{2}(W_{j})+c_{2}(W_{r}))]}\widehat{L}(TZ)ch(\Phi(T_{\mathbf{C}}Z))\\
&\cdot\varphi(\tau)^{(24)}ch(\mathcal{V}_{i})ch(\mathcal{V}_{j})ch(\mathcal{V}_{r})\Big\}^{(12)}.\nonumber
\end{align}

\begin{lem}
$Q_{L}(P_{i}, P_{j}, P_{r}, \tau)$ is a modular form of weight $18$ over $SL_{2}(\mathbf{Z}).$
\end{lem}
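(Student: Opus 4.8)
The plan is to follow the same strategy used in the proofs of Lemmas 3.1, 3.4, 3.7 and 3.11, that is, to rewrite $Q_{L}(P_{i},P_{j},P_{r},\tau)$ as a product of theta-quotients and then read off its transformation behaviour under the two generators $S$ and $T$ of $SL_{2}(\mathbf{Z})$. First I would recall the standard identity, valid with $\{\pm 2\pi\sqrt{-1}x_{k}\mid 1\le k\le 6\}$ the formal Chern roots of $T_{\mathbf{C}}Z$, that
\begin{align}
\widehat{L}(TZ)ch(\Phi(T_{\mathbf{C}}Z))=\prod_{k=1}^{6}\frac{x_{k}\theta'(0,\tau)}{\theta(x_{k},\tau)}\cdot\frac{\theta_{1}(x_{k},\tau)}{\theta_{1}(0,\tau)}\cdot\frac{\theta_{2}(x_{k},\tau)}{\theta_{2}(0,\tau)}\cdot\frac{\theta_{3}(x_{k},\tau)}{\theta_{3}(0,\tau)},
\end{align}
which is the $\widehat{L}$-analogue of (3.36)--(3.39) and is exactly the combination appearing in Liu's construction \cite{L}. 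Combined with (2.24), i.e. $\varphi(\tau)^{(8)}ch(\mathcal{V}_{m})=\tfrac12\big(\prod_{l}\theta_{1}(y_{l}^{m},\tau)+\prod_{l}\theta_{2}(y_{l}^{m},\tau)+\prod_{l}\theta_{3}(y_{l}^{m},\tau)\big)$ for $m=i,j,r$, this expresses $Q_{L}$ as
\begin{align}
Q_{L}(P_{i},P_{j},P_{r},\tau)=\Big\{\tfrac18 e^{\frac{1}{24}E_{2}(\tau)[-2p_{1}(TZ)+A]}\prod_{k=1}^{6}\tfrac{x_{k}\theta'(0,\tau)}{\theta(x_{k},\tau)}\tfrac{\theta_{1}(x_{k},\tau)}{\theta_{1}(0,\tau)}\tfrac{\theta_{2}(x_{k},\tau)}{\theta_{2}(0,\tau)}\tfrac{\theta_{3}(x_{k},\tau)}{\theta_{3}(0,\tau)}\cdot\prod_{m\in\{i,j,r\}}\big(\textstyle\sum_{s=1}^{3}\prod_{l=1}^{8}\theta_{s}(y_{l}^{m},\tau)\big)\Big\}^{(12)},
\end{align}
where $A=\tfrac{1}{30}(c_{2}(W_{i})+c_{2}(W_{j})+c_{2}(W_{r}))$.

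Next I would verify the $T$-invariance. Under $\tau\mapsto\tau+1$ each factor $\theta'(0,\tau)/\theta(x_{k},\tau)$ is unchanged (the prefactors $e^{\pi\sqrt{-1}/4}$ cancel between numerator and denominator), while $\theta_{1}(x_{k},\tau+1)/\theta_{1}(0,\tau+1)$ is unchanged and the pair $\theta_{2},\theta_{3}$ gets swapped inside the product $\prod_{k}(\cdots)$, so their product over $k$ is invariant; likewise each $E_{8}$-factor $\sum_{s}\prod_{l}\theta_{s}(y_{l}^{m},\tau)$ is invariant because $\theta_{1}$ is fixed and $\theta_{2},\theta_{3}$ are interchanged. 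Since $E_{2}(\tau+1)=E_{2}(\tau)$ by (2.22), we get $Q_{L}(P_{i},P_{j},P_{r},\tau+1)=Q_{L}(P_{i},P_{j},P_{r},\tau)$.

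For the $S$-transformation I would substitute the laws (2.13)--(2.16) and (2.21) together with (2.23). The key bookkeeping: each of the $6$ factors $\tfrac{x_{k}\theta'(0,\tau)}{\theta(x_{k},\tau)}\tfrac{\theta_{1}(x_{k},\tau)}{\theta_{1}(0,\tau)}\tfrac{\theta_{2}(x_{k},\tau)}{\theta_{2}(0,\tau)}\tfrac{\theta_{3}(x_{k},\tau)}{\theta_{3}(0,\tau)}$ picks up, under $\tau\mapsto-1/\tau$ and $x_k\mapsto$ (the Chern root rescales by $\tau$), a factor $\tau^{2}$ together with an exponential $e^{-\pi\sqrt{-1}\tau(2\pi\sqrt{-1}x_{k})^{2}\cdot(\text{const})}$ coming from the $e^{\pi\sqrt{-1}\tau v^{2}}$ factors (three from $\theta_{1},\theta_{2},\theta_{3}$ in the numerator cancel against pieces from $\theta$, leaving one net exponential in $x_k^2$); summing the $x_{k}^{2}$-exponentials over $k$ produces exactly $e^{(\text{const})\tau p_{1}(TZ)}$. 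Each of the three $E_{8}$-factors $\sum_{s}\prod_{l=1}^{8}\theta_{s}(y_{l}^{m},\tau)$ transforms (after using the cyclic behaviour of $\theta_{1}\leftrightarrow\theta_{2}\leftrightarrow\theta_{3}$ under $S$) into $\tau^{4}e^{\pi\sqrt{-1}\tau\sum_{l}(2\pi\sqrt{-1}\cdot\text{rescaling})(y_{l}^{m})^{2}}$ times the same sum, and by (2.25), $\sum_{l}(2\pi\sqrt{-1}y_{l}^{m})^{2}=-c_{2}(W_{m})/30$, so the three exponentials assemble into $e^{(\text{const})\tau A}$. The quasimodular anomaly of $E_{2}$ in the prefactor, namely $E_{2}(-1/\tau)=\tau^{2}E_{2}(\tau)-6\sqrt{-1}\tau/\pi$ from (2.23), contributes exactly the compensating exponential $e^{-\frac{1}{24}\cdot\frac{6\sqrt{-1}\tau}{\pi}[-2p_{1}(TZ)+A]}$ that cancels all the $x_{k}^{2}$- and $y_{l}^{m}$-exponentials generated above — this is the whole point of inserting $e^{\frac{1}{24}E_{2}(\tau)[-2p_{1}(TZ)+A]}$. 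Counting powers of $\tau$: $6\times 2$ from the tangent factors plus $3\times 4$ from the three $E_{8}$-factors gives $\tau^{12+12}=\tau^{24}$, but one must also track the Jacobi prefactor $(\tau/\sqrt{-1})^{1/2}$-powers and the degree-$(12)$ truncation; the net homogeneity, combining the overall $\tau$-weight with the degree shift coming from taking the $12$-form part of forms built from the rescaled Chern roots $x_{k}\mapsto x_{k}$, yields weight $18$, matching the parallel computation in Lemma 3.11. I expect the main obstacle to be precisely this exponent bookkeeping in the $S$-transformation: making sure that all the Gaussian factors $e^{\pi\sqrt{-1}\tau(\cdots)v^{2}}$ from $\theta,\theta_{1},\theta_{2},\theta_{3}$ and from $\theta'(0,-1/\tau)$ combine, after invoking (2.25) for all three bundles, into exactly the term killed by the $E_{2}$-anomaly, so that $Q_{L}(P_{i},P_{j},P_{r},-1/\tau)=\tau^{18}Q_{L}(P_{i},P_{j},P_{r},\tau)$ with no residual exponential; once both $Q_{L}(\tau+1)=Q_{L}(\tau)$ and the $S$-law hold, holomorphicity in $\tau$ on $\mathbf{H}$ is clear from the product formulas and the claim follows.
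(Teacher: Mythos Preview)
Your approach is exactly the paper's: rewrite $Q_{L}$ as a theta product and check invariance under $T$ and $S$. The paper records the theta expression (with an overall $2^{6}$ you dropped --- harmless for modularity but worth fixing) and then simply asserts $Q_{L}(\tau+1)=Q_{L}(\tau)$ and $Q_{L}(-1/\tau)=\tau^{18}Q_{L}(\tau)$ without further detail, so your $T$-check is already more than the paper offers.

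Your $S$-transformation bookkeeping, however, is off. Each tangent factor $\dfrac{x_{k}\theta'(0,\tau)}{\theta(x_{k},\tau)}\dfrac{\theta_{1}(x_{k},\tau)}{\theta_{1}(0,\tau)}\dfrac{\theta_{2}(x_{k},\tau)}{\theta_{2}(0,\tau)}\dfrac{\theta_{3}(x_{k},\tau)}{\theta_{3}(0,\tau)}$ contributes \emph{no} explicit $\tau$-power under $S$: the single $\tau$ from $\theta'(0,-1/\tau)$ is absorbed by the rescaling $x_{k}\mapsto\tau x_{k}$ in the numerator, and all the $(\tau/\sqrt{-1})^{1/2}$ prefactors cancel in numerator/denominator pairs. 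Each $E_{8}$ block $\sum_{s}\prod_{l=1}^{8}\theta_{s}(y_{l}^{m},\tau)$ contributes $(\tau/\sqrt{-1})^{4}=\tau^{4}$, so the three blocks give $\tau^{12}$. Finally, the whole theta expression is returned evaluated at the rescaled roots $\tau x_{k},\tau y_{l}^{m}$; extracting the degree-$12$ component (homogeneous of degree $6$ in these degree-$2$ forms) supplies the remaining $\tau^{6}$. So the correct count is $0+12+6=18$, not $12+12=24$ with ad hoc corrections. The $E_{2}$-anomaly cancellation you describe is the right mechanism, and once the exponentials are matched (using $\sum_{l}(2\pi\sqrt{-1}y_{l}^{m})^{2}=-\tfrac{1}{30}c_{2}(W_{m})$ and the analogous identity giving $-2p_{1}(TZ)$ from $\sum_{k}x_{k}^{2}$), the weight-$18$ law follows cleanly.
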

\begin{proof}
We check at once that
\begin{align}
&\widehat{L}(TZ)ch\Big(\bigotimes^{\infty}_{m=1}S_{q^{m}}(\widetilde{T_{\mathbf{C}}Z})\otimes\bigotimes_{n=1}^{\infty}\wedge_{q^{n}}(\widetilde{T_{\mathbf{C}}Z})\Big)=2^{6}\prod_{k=1}^{6}\frac{x_{k}\theta'(0, \tau)}{\theta(x_{k}, \tau)}\frac{\theta_{1}(x_{k}, \tau)}{\theta_{1}(0, \tau)},\\
&ch\Big(\bigotimes_{u=1}^{\infty}\wedge_{-q^{u-\frac{1}{2}}}(\widetilde{T_{\mathbf{C}}Z})\Big)=\prod_{k=1}^{6}\frac{\theta_{2}(x_{k}, \tau)}{\theta_{2}(0, \tau)},\\
&ch\Big(\bigotimes_{v=1}^{\infty}\wedge_{q^{v-\frac{1}{2}}}(\widetilde{T_{\mathbf{C}}Z})\Big)=\prod_{k=1}^{6}\frac{\theta_{3}(x_{k}, \tau)}{\theta_{3}(0, \tau)},
\end{align}
then
\begin{align}
&Q_{L}(P_{i}, P_{j}, P_{r}, \tau)=\\
&2^{6}\bigg\{\frac{1}{8}e^{\frac{1}{24}E_{2}(\tau)[-2p_{1}(TZ)+\frac{1}{30}(c_{2}(W_{i})+c_{2}(W_{j})+c_{2}(W_{r}))]}\prod_{k=1}^{6}\frac{x_{k}\theta'(0, \tau)}{\theta(x_{k}, \tau)}\frac{\theta_{1}(x_{k}, \tau)}{\theta_{1}(0, \tau)}\frac{\theta_{2}(x_{k}, \tau)}{\theta_{2}(0, \tau)}\frac{\theta_{3}(x_{k}, \tau)}{\theta_{3}(0, \tau)}\nonumber\\
&\cdot\Big(\prod_{l=1}^{8}\theta_{1}(y_{l}^{i}, \tau)+\prod_{l=1}^{8}\theta_{2}(y_{l}^{i}, \tau)+\prod_{l=1}^{8}\theta_{3}(y_{l}^{i}, \tau)\Big)\Big(\prod_{l=1}^{8}\theta_{1}(y_{l}^{j}, \tau)+\prod_{l=1}^{8}\theta_{2}(y_{l}^{j}, \tau)+\prod_{l=1}^{8}\theta_{3}(y_{l}^{j}, \tau)\Big)\nonumber\\
&\cdot\Big(\prod_{l=1}^{8}\theta_{1}(y_{l}^{r}, \tau)+\prod_{l=1}^{8}\theta_{2}(y_{l}^{r}, \tau)+\prod_{l=1}^{8}\theta_{3}(y_{l}^{r}, \tau)\Big)\bigg\}^{(12)}.\nonumber
\end{align}
Obviously,
\begin{align}
&Q_{L}(P_{i}, P_{j}, P_{r}, \tau+1)=Q_{L}(P_{i}, P_{j}, P_{r}, \tau),\\
&Q_{L}(P_{i}, P_{j}, P_{r}, -\frac{1}{\tau})=\tau^{18}Q_{L}(P_{i}, P_{j}, P_{r}, \tau),
\end{align}
we thus get
$Q_{L}(P_{i}, P_{j}, P_{r}, \tau)$ is a modular form of weight $18$ over $SL_{2}(\mathbf{Z}).$
\end{proof}

\begin{thm}
For $12$ dimensional oriented smooth closed manifold, we see that
\begin{align}
&\Big\{e^{\frac{1}{24}(-2p_{1}(TZ)+\frac{1}{30}(c_{2}(W_{i})+c_{2}(W_{j})+c_{2}(W_{r})))}\frac{1}{2}(-6-2p_{1}(TZ)+\frac{1}{30}(c_{2}(W_{i})+c_{2}(W_{j})+c_{2}(W_{r})))\\
&\cdot(-2p_{1}(TZ)+\frac{1}{30}(c_{2}(W_{i})+c_{2}(W_{j})+c_{2}(W_{r})))\widehat{L}(TZ)-e^{\frac{1}{24}(-2p_{1}(TZ)+\frac{1}{30}(c_{2}(W_{i})+c_{2}(W_{j})+c_{2}(W_{r})))}\nonumber\\
&\cdot(-2p_{1}(TZ)+\frac{1}{30}(c_{2}(W_{i})+c_{2}(W_{j})+c_{2}(W_{r})))\widehat{L}(TZ)ch(504+D_{1}+W_{i}+W_{j}+W_{r})\Big\}^{(12)}\nonumber\\
&=\Big\{e^{\frac{1}{24}(-2p_{1}(TZ)+\frac{1}{30}(c_{2}(W_{i})+c_{2}(W_{j})+c_{2}(W_{r})))}\widehat{L}(TZ)\big(ch(-73764-504D_{1}-D_{2}-504W_{i}\nonumber\\
&-504W_{j}-504W_{r}-\overline{W_{i}}-\overline{W_{j}}-\overline{W_{r}})-ch(D_{1})ch(W_{i}+W_{j}+W_{r})-ch(W_{i})ch(W_{j})\nonumber\\
&-ch(W_{i})ch(W_{r})-ch(W_{j})ch(W_{r})\big)\Big\}^{(12)},\nonumber
\end{align}
where
\begin{align}
D_{1}&=2\widetilde{T_{\mathbf{C}}Z}-\widetilde{T_{\mathbf{C}}Z}\otimes\widetilde{T_{\mathbf{C}}Z}+2\wedge^{2}(\widetilde{T_{\mathbf{C}}Z}),\\
D_{2}&=2\widetilde{T_{\mathbf{C}}Z}+S^{2}(\widetilde{T_{\mathbf{C}}Z})+\widetilde{T_{\mathbf{C}}Z}\otimes\widetilde{T_{\mathbf{C}}Z}-2\widetilde{T_{\mathbf{C}}Z}\otimes\widetilde{T_{\mathbf{C}}Z}\otimes\widetilde{T_{\mathbf{C}}Z}+4\widetilde{T_{\mathbf{C}}Z}\otimes\wedge^{2}(\widetilde{T_{\mathbf{C}}Z})\\
&+\wedge^{2}(\widetilde{T_{\mathbf{C}}Z})+\wedge^{2}(\widetilde{T_{\mathbf{C}}Z})\otimes\wedge^{2}(\widetilde{T_{\mathbf{C}}Z})-2\widetilde{T_{\mathbf{C}}Z}\otimes\wedge^{3}(\widetilde{T_{\mathbf{C}}Z})+2\wedge^{4}(\widetilde{T_{\mathbf{C}}Z}).\nonumber
\end{align}
\end{thm}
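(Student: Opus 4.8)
The plan is to carry out, for $Q_{L}$, the same scheme used to prove Theorems 3.2, 3.5, 3.8 and 3.11. By Lemma 3.12, $Q_{L}(P_{i},P_{j},P_{r},\tau)$ is a modular form of weight $18$ over $SL_{2}(\mathbf{Z})$, and the space of such forms is two-dimensional with basis $E_{4}(\tau)^{3}E_{6}(\tau)$ and $E_{6}(\tau)^{3}$. Hence there are degree-$12$ forms $\lambda_{1},\lambda_{2}$ on $Z$, independent of $\tau$, with
\begin{align}
Q_{L}(P_{i},P_{j},P_{r},\tau)=\lambda_{1}E_{4}(\tau)^{3}E_{6}(\tau)+\lambda_{2}E_{6}(\tau)^{3}.\nonumber
\end{align}
From the expansions of $E_{4}$ and $E_{6}$ recalled in Section 3 one gets $E_{4}^{3}E_{6}=1+216q-200232q^{2}+\cdots$ and $E_{6}^{3}=1-1512q+712152q^{2}+\cdots$; comparing constant terms and coefficients of $q$ then expresses $\lambda_{1},\lambda_{2}$ as fixed rational linear combinations of the first two $q$-coefficients of $Q_{L}$, and the coefficient of $q^{2}$ leaves one linear relation among the first three $q$-coefficients of $Q_{L}$. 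That relation will be the asserted identity.

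Thus the substance is the $q$-expansion, through order $q^{2}$, of
\begin{align}
\mathcal{E}:=e^{\frac{1}{24}E_{2}(\tau)A}\,\widehat{L}(TZ)\,ch(\Phi(T_{\mathbf{C}}Z))\,\varphi(\tau)^{(24)}\,ch(\mathcal{V}_{i})ch(\mathcal{V}_{j})ch(\mathcal{V}_{r}),\nonumber
\end{align}
with $A=-2p_{1}(TZ)+\frac{1}{30}(c_{2}(W_{i})+c_{2}(W_{j})+c_{2}(W_{r}))$, followed by $\{\,\cdot\,\}^{(12)}$. I would assemble it from four pieces. (i) Since $E_{2}(\tau)=1-24q-72q^{2}+\cdots$, one has $e^{\frac{1}{24}E_{2}(\tau)A}=e^{A/24}\big(1-qA+\frac{1}{2}(A^{2}-6A)q^{2}+\cdots\big)$. (ii) $ch(\Phi(T_{\mathbf{C}}Z))=1+ch(D_{1})q+ch(D_{2})q^{2}+\cdots$ is found by multiplying the four series $\bigotimes_{m}S_{q^{m}}(\widetilde{T_{\mathbf{C}}Z})$, $\bigotimes_{n}\wedge_{q^{n}}(\widetilde{T_{\mathbf{C}}Z})$, $\bigotimes_{u}\wedge_{-q^{u-\frac{1}{2}}}(\widetilde{T_{\mathbf{C}}Z})$, $\bigotimes_{v}\wedge_{q^{v-\frac{1}{2}}}(\widetilde{T_{\mathbf{C}}Z})$, whose low-order terms are those already displayed in the proof of Theorem 3.11 with $\widetilde{\xi_{\mathbf{C}}}$ replaced by $\widetilde{T_{\mathbf{C}}Z}$: the half-integer powers from the last two factors cancel in the product, and collecting what survives yields exactly the $D_{1}$ and $D_{2}$ in the statement. (iii) Since $\varphi(\tau)^{(24)}=\prod_{n}(1-q^{n})^{24}=1-24q+252q^{2}+\cdots$ and, directly from the definition of $\mathcal{V}_{i}$ and the identifications of $W_{i},\overline{W_{i}}$, $ch(\mathcal{V}_{i})=1+ch(W_{i})q+ch(\overline{W_{i}})q^{2}+\cdots$, one gets (as in the proof of Theorem 3.8) that $\varphi(\tau)^{(24)}ch(\mathcal{V}_{i})ch(\mathcal{V}_{j})ch(\mathcal{V}_{r})$ equals $1+ch(-24+W_{i}+W_{j}+W_{r})q+\big(ch(252+\overline{W_{i}}+\overline{W_{j}}+\overline{W_{r}}-24W_{i}-24W_{j}-24W_{r})+ch(W_{i})ch(W_{j})+ch(W_{i})ch(W_{r})+ch(W_{j})ch(W_{r})\big)q^{2}+\cdots$. (iv) $\widehat{L}(TZ)$ carries no $q$ and simply multiplies. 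The Cauchy product then gives $\mathcal{E}=a_{0}+a_{1}q+a_{2}q^{2}+\cdots$ with $a_{0}=e^{A/24}\widehat{L}(TZ)$, $a_{1}=e^{A/24}\widehat{L}(TZ)\big(ch(-24+D_{1}+W_{i}+W_{j}+W_{r})-A\big)$, and an explicit but longer $a_{2}$ built from $\frac{1}{2}(A^{2}-6A)$, the $q$-coefficients above, $ch(D_{2})$, and $ch(D_{1})ch(-24+W_{i}+W_{j}+W_{r})$.

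It then remains to impose the modular constraint. From $\{a_{0}\}^{(12)}=\lambda_{1}+\lambda_{2}$ and $\{a_{1}\}^{(12)}=216\lambda_{1}-1512\lambda_{2}$ one solves for $\lambda_{1},\lambda_{2}$ and substitutes into $\{a_{2}\}^{(12)}=-200232\lambda_{1}+712152\lambda_{2}$, obtaining $\{a_{2}\}^{(12)}+528\{a_{1}\}^{(12)}+86184\{a_{0}\}^{(12)}=0$. Feeding the expressions for $a_{0},a_{1},a_{2}$ into this and regrouping — the $-A\cdot(\cdots)$ contributions collecting into $-A\,ch(504+D_{1}+W_{i}+W_{j}+W_{r})$ and the remaining constant/$D$/$W$/$\overline{W}$ contributions into $ch(73764+504D_{1}+D_{2}+504W_{i}+504W_{j}+504W_{r}+\overline{W_{i}}+\overline{W_{j}}+\overline{W_{r}})$ — and moving the latter together with $ch(D_{1})ch(W_{i}+W_{j}+W_{r})$, $ch(W_{i})ch(W_{j})$, $ch(W_{i})ch(W_{r})$, $ch(W_{j})ch(W_{r})$ to the other side produces precisely the identity of the theorem; writing out $D_{1},D_{2}$ from step (ii) finishes it. The main obstacle will be the order-$q^{2}$ bookkeeping: $a_{2}$ is a sizeable sum of tensor, symmetric, and exterior powers of $\widetilde{T_{\mathbf{C}}Z}$ entangled with the $W_{\bullet}$- and $\overline{W_{\bullet}}$-bundles and the cross products $ch(W_{i})ch(W_{j})$, and the delicate point is recognizing, after the elimination of $\lambda_{1}$ and $\lambda_{2}$, which subsums recombine into the compact Chern characters $ch(504+D_{1}+W_{i}+W_{j}+W_{r})$ and $ch(-73764-504D_{1}-D_{2}-504W_{i}-504W_{j}-504W_{r}-\overline{W_{i}}-\overline{W_{j}}-\overline{W_{r}})$ that appear in the statement; past that, the argument is the routine one used for the earlier theorems.
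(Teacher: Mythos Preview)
Your proposal is correct and follows essentially the same route as the paper: expand the integrand of $Q_{L}$ through order $q^{2}$ using the factor-by-factor $q$-expansions, invoke Lemma~3.13 to write $Q_{L}=\lambda_{1}E_{4}^{3}E_{6}+\lambda_{2}E_{6}^{3}$, and eliminate $\lambda_{1},\lambda_{2}$ by comparing the coefficients of $1,q,q^{2}$ to obtain the asserted identity, with $D_{1},D_{2}$ read off from the expansion of $\Phi(T_{\mathbf{C}}Z)$. Your write-up is in fact more explicit than the paper's (you record the numerical relation $\{a_{2}\}^{(12)}+528\{a_{1}\}^{(12)}+86184\{a_{0}\}^{(12)}=0$ and check that the regrouping produces the stated Chern characters), but the argument is the same.
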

\begin{proof}
It is evident that
\begin{align}
&e^{\frac{1}{24}E_{2}(\tau)[-2p_{1}(TZ)+\frac{1}{30}(c_{2}(W_{i})+c_{2}(W_{j})+c_{2}(W_{r}))]}\widehat{L}(TZ)ch(\Phi(T_{\mathbf{C}}Z))\varphi(\tau)^{(24)}ch(\mathcal{V}_{i})ch(\mathcal{V}_{j})ch(\mathcal{V}_{r})\\
&=e^{\frac{1}{24}(-2p_{1}(TZ)+\frac{1}{30}(c_{2}(W_{i})+c_{2}(W_{j})+c_{2}(W_{r})))}\widehat{L}(TZ)+q\big(-e^{\frac{1}{24}(-2p_{1}(TZ)+\frac{1}{30}(c_{2}(W_{i})+c_{2}(W_{j})+c_{2}(W_{r})))}\nonumber\\
&\cdot\widehat{L}(TZ)(-2p_{1}(TZ)+\frac{1}{30}(c_{2}(W_{i})+c_{2}(W_{j})+c_{2}(W_{r})))+e^{\frac{1}{24}(-2p_{1}(TZ)+\frac{1}{30}(c_{2}(W_{i})+c_{2}(W_{j})+c_{2}(W_{r})))}\nonumber\\
&\cdot\widehat{L}(TZ)ch(-24+D_{1}+W_{i}+W_{j}+W_{r})\big)+q^{2}\big(e^{\frac{1}{24}(-2p_{1}(TZ)+\frac{1}{30}(c_{2}(W_{i})+c_{2}(W_{j})+c_{2}(W_{r})))}\nonumber
\end{align}
\begin{align}
&\cdot\frac{1}{2}(-6-2p_{1}(TZ)+\frac{1}{30}(c_{2}(W_{i})+c_{2}(W_{j})+c_{2}(W_{r})))(-2p_{1}(TZ)+\frac{1}{30}(c_{2}(W_{i})+c_{2}(W_{j})\nonumber\\
&+c_{2}(W_{r})))\widehat{L}(TZ)-e^{\frac{1}{24}(-2p_{1}(TZ)+\frac{1}{30}(c_{2}(W_{i})+c_{2}(W_{j})+c_{2}(W_{r})))}(-2p_{1}(TZ)+\frac{1}{30}(c_{2}(W_{i})+c_{2}(W_{j})\nonumber\\
&+c_{2}(W_{r})))\widehat{L}(TZ)ch(-24+D_{1}+W_{i}+W_{j}+W_{r})+e^{\frac{1}{24}(-2p_{1}(TZ)+\frac{1}{30}(c_{2}(W_{i})+c_{2}(W_{j})+c_{2}(W_{r})))}\nonumber\\
&\cdot\widehat{L}(TZ)ch(252+D_{2}+\overline{W_{i}}+\overline{W_{j}}+\overline{W_{r}}-24D_{1}+W_{i}D_{1}+W_{j}D_{1}+W_{r}D_{1}-24W_{i}-24W_{j}\nonumber\\
&-24W_{r}+W_{i}W_{j}+W_{i}W_{r}+W_{j}W_{r})\big)+\cdot\cdot\cdot.\nonumber
\end{align}

Similarly, we have
\begin{align}
Q_{L}(P_{i}, P_{j}, P_{r}, \tau)=\lambda_{1}E_{4}(\tau)^{3}E_{6}(\tau)+\lambda_{2}E_{6}(\tau)^{3}.
\end{align}
We therefore get (3.68).

What is left is to show that $D_{1}$ and $D_{2}.$
\begin{align}
&\bigotimes_{n=1}^{\infty}\wedge_{q^{n}}(\widetilde{T_{\mathbf{C}}Z})=1+q\widetilde{T_{\mathbf{C}}Z}+q^{2}(\widetilde{T_{\mathbf{C}}Z}+\wedge^{2}(\widetilde{T_{\mathbf{C}}Z}))+\cdot\cdot\cdot,\\
&\bigotimes_{u=1}^{\infty}\wedge_{-q^{u-\frac{1}{2}}}(\widetilde{T_{\mathbf{C}}Z})=1-q^{\frac{1}{2}}\widetilde{T_{\mathbf{C}}Z}+q\wedge^{2}(\widetilde{T_{\mathbf{C}}Z})+q^{\frac{3}{2}}(-\widetilde{T_{\mathbf{C}}Z}-\wedge^{3}(\widetilde{T_{\mathbf{C}}Z}))\\
&+q^{2}(\widetilde{T_{\mathbf{C}}Z}\otimes\widetilde{T_{\mathbf{C}}Z}+\wedge^{4}(\widetilde{T_{\mathbf{C}}Z}))+\cdot\cdot\cdot,\nonumber\\
&\bigotimes_{v=1}^{\infty}\wedge_{q^{v-\frac{1}{2}}}(\widetilde{T_{\mathbf{C}}Z})=1+q^{\frac{1}{2}}\widetilde{T_{\mathbf{C}}Z}+q\wedge^{2}(\widetilde{T_{\mathbf{C}}Z})+q^{\frac{3}{2}}(\widetilde{T_{\mathbf{C}}Z}+\wedge^{3}(\widetilde{T_{\mathbf{C}}Z}))\\
&+q^{2}(\widetilde{T_{\mathbf{C}}Z}\otimes\widetilde{T_{\mathbf{C}}Z}+\wedge^{4}(\widetilde{T_{\mathbf{C}}Z}))+\cdot\cdot\cdot,\nonumber
\end{align}
it follows immediately that
\begin{align}
&\Theta(T_{\mathbf{C}}Z)=1+q(2\widetilde{T_{\mathbf{C}}Z}-\widetilde{T_{\mathbf{C}}Z}\otimes\widetilde{T_{\mathbf{C}}Z}+2\wedge^{2}(\widetilde{T_{\mathbf{C}}Z}))+q^{2}(2\widetilde{T_{\mathbf{C}}Z}+S^{2}(\widetilde{T_{\mathbf{C}}Z})\\
&+\widetilde{T_{\mathbf{C}}Z}\otimes\widetilde{T_{\mathbf{C}}Z}-2\widetilde{T_{\mathbf{C}}Z}\otimes\widetilde{T_{\mathbf{C}}Z}\otimes\widetilde{T_{\mathbf{C}}Z}+4\widetilde{T_{\mathbf{C}}Z}\otimes\wedge^{2}(\widetilde{T_{\mathbf{C}}Z})+\wedge^{2}(\widetilde{T_{\mathbf{C}}Z})\nonumber\\
&+\wedge^{2}(\widetilde{T_{\mathbf{C}}Z})\otimes\wedge^{2}(\widetilde{T_{\mathbf{C}}Z})-2\widetilde{T_{\mathbf{C}}Z}\otimes\wedge^{3}(\widetilde{T_{\mathbf{C}}Z})+2\wedge^{4}(\widetilde{T_{\mathbf{C}}Z}))+\cdot\cdot\cdot.\nonumber
\end{align}
For this reason
\begin{align}
D_{1}&=2\widetilde{T_{\mathbf{C}}Z}-\widetilde{T_{\mathbf{C}}Z}\otimes\widetilde{T_{\mathbf{C}}Z}+2\wedge^{2}(\widetilde{T_{\mathbf{C}}Z}),\\
D_{2}&=2\widetilde{T_{\mathbf{C}}Z}+S^{2}(\widetilde{T_{\mathbf{C}}Z})+\widetilde{T_{\mathbf{C}}Z}\otimes\widetilde{T_{\mathbf{C}}Z}-2\widetilde{T_{\mathbf{C}}Z}\otimes\widetilde{T_{\mathbf{C}}Z}\otimes\widetilde{T_{\mathbf{C}}Z}+4\widetilde{T_{\mathbf{C}}Z}\otimes\wedge^{2}(\widetilde{T_{\mathbf{C}}Z})\\
&+\wedge^{2}(\widetilde{T_{\mathbf{C}}Z})+\wedge^{2}(\widetilde{T_{\mathbf{C}}Z})\otimes\wedge^{2}(\widetilde{T_{\mathbf{C}}Z})-2\widetilde{T_{\mathbf{C}}Z}\otimes\wedge^{3}(\widetilde{T_{\mathbf{C}}Z})+2\wedge^{4}(\widetilde{T_{\mathbf{C}}Z}).\nonumber
\end{align}
\end{proof}

\begin{lem}
When $-2p_{1}(TZ)+\frac{1}{30}(c_{2}(W_{i})+c_{2}(W_{j})+c_{2}(W_{r}))=0,$ we get
\begin{align}
&73764\Big\{\widehat{L}(TZ)\Big\}^{(12)}=\Big\{\widehat{L}(TZ)\big(ch(-504D_{1}-D_{2}-504W_{i}-504W_{j}-504W_{r}-\overline{W_{i}}-\overline{W_{j}}\\
&-\overline{W_{r}})-ch(D_{1})ch(W_{i}+W_{j}+W_{r})-ch(W_{i})ch(W_{j})-ch(W_{i})ch(W_{r})-ch(W_{j})ch(W_{r})\big)\Big\}^{(12)},\nonumber
\end{align}
where
\begin{align}
D_{1}&=2\widetilde{T_{\mathbf{C}}Z}-\widetilde{T_{\mathbf{C}}Z}\otimes\widetilde{T_{\mathbf{C}}Z}+2\wedge^{2}(\widetilde{T_{\mathbf{C}}Z}),\\
D_{2}&=2\widetilde{T_{\mathbf{C}}Z}+S^{2}(\widetilde{T_{\mathbf{C}}Z})+\widetilde{T_{\mathbf{C}}Z}\otimes\widetilde{T_{\mathbf{C}}Z}-2\widetilde{T_{\mathbf{C}}Z}\otimes\widetilde{T_{\mathbf{C}}Z}\otimes\widetilde{T_{\mathbf{C}}Z}+4\widetilde{T_{\mathbf{C}}Z}\otimes\wedge^{2}(\widetilde{T_{\mathbf{C}}Z})\\
&+\wedge^{2}(\widetilde{T_{\mathbf{C}}Z})+\wedge^{2}(\widetilde{T_{\mathbf{C}}Z})\otimes\wedge^{2}(\widetilde{T_{\mathbf{C}}Z})-2\widetilde{T_{\mathbf{C}}Z}\otimes\wedge^{3}(\widetilde{T_{\mathbf{C}}Z})+2\wedge^{4}(\widetilde{T_{\mathbf{C}}Z}).\nonumber
\end{align}
\end{lem}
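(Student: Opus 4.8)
The plan is to read off Lemma 3.14 from Theorem 3.13 by specializing to the case in which the combination
$\alpha := -2p_{1}(TZ)+\frac{1}{30}(c_{2}(W_{i})+c_{2}(W_{j})+c_{2}(W_{r}))$
vanishes. Under $\alpha=0$ the scalar factor $e^{\frac{1}{24}\alpha}$ that appears in front of every term of the identity (3.68) becomes $1$, so both sides collapse considerably.

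First I would examine the left-hand side of (3.68). Each of its two summands carries an explicit factor of $\alpha$ --- the first inside the product $\frac{1}{2}(-6+\alpha)\alpha\,\widehat{L}(TZ)$, the second as the bare factor in $\alpha\,\widehat{L}(TZ)ch(504+D_{1}+W_{i}+W_{j}+W_{r})$ --- so with $\alpha=0$ the entire left-hand side is zero. Thus (3.68) reduces to the statement that the right-hand side vanishes.

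Next I would expand that right-hand side, now with $e^{\frac{1}{24}\alpha}=1$. The Chern character $ch(-73764-504D_{1}-D_{2}-504W_{i}-504W_{j}-504W_{r}-\overline{W_{i}}-\overline{W_{j}}-\overline{W_{r}})$ splits off its rank contribution as $-73764+ch(-504D_{1}-D_{2}-504W_{i}-504W_{j}-504W_{r}-\overline{W_{i}}-\overline{W_{j}}-\overline{W_{r}})$. Multiplying through by $\widehat{L}(TZ)$, taking the degree $12$ component (a linear operation, hence compatible with all these rearrangements), and moving the resulting term $-73764\{\widehat{L}(TZ)\}^{(12)}$ to the other side gives precisely the asserted identity; the four cross terms $ch(D_{1})ch(W_{i}+W_{j}+W_{r})$, $ch(W_{i})ch(W_{j})$, $ch(W_{i})ch(W_{r})$, $ch(W_{j})ch(W_{r})$ are carried over unchanged. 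The formulas for $D_{1}$ and $D_{2}$ are exactly those obtained in the proof of Theorem 3.13 from the $q$-expansion of $\Phi(T_{\mathbf{C}}Z)$, so they can be quoted verbatim.

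There is no genuine obstacle: the proof is a specialization followed by a one-line bookkeeping step. The only points warranting a second glance are checking that no summand of the left-hand side of (3.68) survives when $\alpha=0$ --- inspection confirms each is proportional to $\alpha$ --- and keeping track of the sign of the constant $-73764$ when it is transposed across the equality, which is what turns it into the coefficient $73764$ on the left of the final formula.
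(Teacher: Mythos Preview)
Your proposal is correct and follows exactly the approach the paper has in mind: the paper states this lemma without proof, as an immediate corollary of the preceding theorem (Theorem 3.14 in the paper's numbering, which you call Theorem 3.13), obtained by setting the combination $-2p_{1}(TZ)+\frac{1}{30}(c_{2}(W_{i})+c_{2}(W_{j})+c_{2}(W_{r}))$ to zero and isolating the constant $-73764$. Aside from the off-by-one in your theorem/lemma numbers, there is nothing to add.
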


\section{ Conclusion }

To enrich the results of anomaly cancellation formulas, we give more anomaly cancellation formulas for the gauge groups $E_{8},$ $E_{8}\times E_{8}$ and $E_{8}\times E_{8}\times E_{8}.$

To begin with, we get $SL_{2}(\mathbf{Z})$ modular forms of weight $10$ and $14$ for $12$ dimensional oriented smooth closed manifold.
Thus we obtain some anomaly cancellation formulas for $E_{8},$ $E_{8}\times E_{8}.$

For another thing, we define $Q(P_{i}, P_{j}, P_{r}, \tau)$ and $Q_{L}(P_{i}, P_{j}, P_{r}, \tau).$
We conclude that $Q(P_{i}, P_{j}, P_{r}, \tau)$ is a modular form over $SL_{2}(\mathbf{Z})$ with the weight $18$ and $Q_{L}(P_{i}, P_{j}, P_{r}, \tau)$ is a modular form over $SL_{2}(\mathbf{Z})$ with the weight $18.$
Therefore we deduce that some anomaly cancellation formulas for $E_{8}\times E_{8}\times E_{8}.$

\vskip 1 true cm

\section{ Acknowledgements }

The author was supported in part by National Natural Science Foundation of China (NSFC) No.11771070. 
The author thanks the referee for his (or her) careful reading and helpful comments.

\vskip 1 true cm


\bigskip
\bigskip

\noindent {\footnotesize {\it S. Liu} \\
{School of Mathematics and Statistics, Northeast Normal University, Changchun 130024, China}\\
{Email: liusy719@nenu.edu.cn}

\noindent {\footnotesize {\it Y. Wang} \\
{School of Mathematics and Statistics, Northeast Normal University, Changchun 130024, China}\\
{Email: wangy581@nenu.edu.cn}

\noindent {\footnotesize {\it Y. Yang} \\
{School of Mathematics and Statistics, Northeast Normal University, Changchun 130024, China}\\
{Email: yangyc580@nenu.edu.cn}

\end{document}